\begin{document}
\setlength{\baselineskip}{16pt}

\parindent 0.5cm
\evensidemargin 0cm \oddsidemargin 0cm \topmargin 0cm \textheight 22.5cm \textwidth 16cm \footskip 2cm \headsep
0cm

\newtheorem{theorem}{Theorem}[section]
\newtheorem{lemma}{Lemma}[section]
\newtheorem{proposition}{Proposition}[section]
\newtheorem{definition}{Definition}[section]
\newtheorem{example}{Example}[section]
\newtheorem{corollary}{Corollary}[section]

\newtheorem{remark}{Remark}[section]

\numberwithin{equation}{section}

\def\p{\partial}
\def\I{\textit}
\def\R{\mathbb R}
\def\C{\mathbb C}
\def\u{\underline}
\def\l{\lambda}
\def\a{\alpha}
\def\O{\Omega}
\def\e{\epsilon}
\def\ls{\lambda^*}
\def\D{\displaystyle}
\def\wyx{ \frac{w(y,t)}{w(x,t)}}
\def\imp{\Rightarrow}
\def\tE{\tilde E}
\def\tX{\tilde X}
\def\tH{\tilde H}
\def\tu{\tilde u}
\def\d{\mathcal D}
\def\aa{\mathcal A}
\def\DH{\mathcal D(\tH)}
\def\bE{\bar E}
\def\bH{\bar H}
\def\M{\mathcal M}
\renewcommand{\labelenumi}{(\arabic{enumi})}

\def\disp{\displaystyle}
\def\undertex#1{$\underline{\hbox{#1}}$}
\def\card{\mathop{\hbox{card}}}
\def\sgn{\mathop{\hbox{sgn}}}
\def\exp{\mathop{\hbox{exp}}}
\def\OFP{(\Omega,{\cal F},\PP)}
\newcommand\JM{Mierczy\'nski}
\newcommand\RR{\ensuremath{\mathbb{R}}}
\newcommand\CC{\ensuremath{\mathbb{C}}}
\newcommand\QQ{\ensuremath{\mathbb{Q}}}
\newcommand\ZZ{\ensuremath{\mathbb{Z}}}
\newcommand\NN{\ensuremath{\mathbb{N}}}
\newcommand\PP{\ensuremath{\mathbb{P}}}
\newcommand\abs[1]{\ensuremath{\lvert#1\rvert}}
\newcommand\normf[1]{\ensuremath{\lVert#1\rVert_{f}}}
\newcommand\normfRb[1]{\ensuremath{\lVert#1\rVert_{f,R_b}}}
\newcommand\normfRbone[1]{\ensuremath{\lVert#1\rVert_{f, R_{b_1}}}}
\newcommand\normfRbtwo[1]{\ensuremath{\lVert#1\rVert_{f,R_{b_2}}}}
\newcommand\normtwo[1]{\ensuremath{\lVert#1\rVert_{2}}}
\newcommand\norminfty[1]{\ensuremath{\lVert#1\rVert_{\infty}}}

\title{Stability of
 transition waves  and   positive entire  solutions of  Fisher-KPP equations with time and space dependence}

\author{
 Wenxian Shen\\
Department of Mathematics and Statistics\\
Auburn University\\
Auburn University, AL 36849\\
U.S.A. }

\date{}
\maketitle

\noindent {\bf Abstract.} This paper is concerned with the stability of transition waves and   strictly positive entire  solutions
of random and nonlocal dispersal evolution equations of Fisher-KPP type with general
time  and space dependence, including time and space periodic or almost periodic dependence as special cases.
We first show the existence, uniqueness, and stability of  strictly positive entire solutions of such equations.
Next, we show the stability of uniformly continuous  transition waves connecting the unique  strictly positive entire solution and  the trivial solution zero
and
satisfying certain decay property at the end close to the trivial solution zero (if it exists).
The existence of transition waves has been studied in \cite{LiZh1,Nad, NoRuXi, NoXi, Wei2} for random dispersal
Fisher-KPP equations with time and space periodic dependence, in \cite{NaRo1, NaRo2, NaRo3, She6,She7,She8,TaZhZl,Zla}  for random dispersal Fisher-KPP equations with quite general time and/or  space dependence,  and in \cite{CoDaMa,RaShZh,ShZh1} for nonlocal dispersal Fisher-KPP equations with time and/or space periodic dependence.
The stability result established in this paper implies  that the transition waves obtained in many of the  above mentioned papers are asymptotically stable {for well-fitted perturbation.} Up to the author's knowledge,
it is the first time that the stability of transition waves of Fisher-KPP equations with  general time and space dependence
is studied.

\medskip

\noindent {\bf Key words.} Fisher-KPP equation,  random dispersal, nonlocal dispersal, transition wave, positive entire  solution,
stability, almost periodic.

\medskip

\noindent {\bf Mathematics subject classification.} 35B08, 35C07, 35K57, 45J05,  47J35, 58D25,  92D25.

\section{Introduction}
\setcounter{equation}{0}

The current paper is devoted to the study of the stability of transition waves and entire positive solutions of dispersal evolution equations of the form,
\begin{equation}
\label{main-eq}
 \frac{\p u}{\p t}=\mathcal{A} u+uf(t,x,u), \quad x\in \RR,
\end{equation}
where
$\mathcal{A}u(t,x)=u_{xx}(t,x)$ or $\mathcal{A}u(t,x)=\int_{\RR}\kappa(y-x)u(t,y)dy-u(t,x)$ for some nonnegative smooth function
$\kappa(\cdot)$ with $\kappa(z)>0$ for $\|z\|<r_0$ and some $r_0>0$, $\kappa(z)=0$ for $\|z\|\ge r_0$,  and $\int_{\RR}\kappa(y)dy=1$,  and  $f(t,x,u)$ is of Fisher-KPP type
in $u$.
 More precisely,  we assume that $f(t,x,u)$ satisfies  the following standing assumption.

\medskip

\noindent{\bf (H0)} {\it $f(t,x,u)$ is globally H\"older continuous in $t$ uniformly with respect to $x\in\RR$ and $u$ in bounded sets, is globally Lipschitz continuous
in $x$ uniformly with respect to $t\in\RR$ and $u$ in bounded sets, and is differentiable in $u$ with $f_u(t,x,u)$ being  bounded and uniformly continuous
in  $t\in\RR$, $x\in\RR$, and $u$ in bounded sets.
There are $\beta_0>0$ and
$P_0>0$ such that $f(t,x,u)\leq -\beta_0$ for $t,x\in\RR$ and $u\geq P_0 $ and
$\frac{\p f}{\p u} (t,x,u)\leq -\beta_0$ for $t,x\in\RR$ and $u\geq 0$. Moreover,
\begin{equation}
\label{condition-eq1}
-\infty<\inf_{t\in\RR,x\in\RR,0\le u\le M} f(t,x,u)\le \sup_{t\in\RR,x\in\RR,0\le u\le M}f(t,x,u)<\infty
\end{equation}
for all $M>0$, and
\begin{equation}
\label{condition-eq2} \liminf_{t-s\to\infty}\frac{1}{t-s}\int_s^t
\inf_{x\in\RR} f(\tau,x,0)d\tau>0.
\end{equation}
}

\vspace{-0.1in} Equations \eqref{main-eq} appears in the study of population dynamics of species in biology (see \cite{ArWe1}, \cite{ArWe2}, \cite{BeHaRo}, \cite{Fisher},
\cite{KPP}), where  $u(t,x)$ represents  the population density of a species
at time $t$ and space location $x$, $\mathcal{A}u$ describes the dispersal or movement of the organisms  and $f(t,x,u)$ describes the growth rate
of the population. When $\mathcal{A}u(t,x)=u_{xx}$, it indicates that the movement of the organisms occurs between adjacent locations randomly and the dispersal in this case is referred to as
{\it random dispersal}.
When $\mathcal{A}u(t,x)=\int_{\RR}\kappa(y-x)u(t,y)dy-u(t,x)$, it indicates that the movement of the organisms occurs between adjacent as well as non-adjacent locations
and the dispersal in this case  is referred to as {\it nonlocal dispersal}.
The time and space dependence of the equation reflects the heterogeneity of the underlying environments.

Because of biological reason,  only  nonnegative solutions of \eqref{main-eq} will be considered throughout this paper.
Also, by a solution of \eqref{main-eq} in this paper, we always mean a classical solution, i.e., a solution satisfies \eqref{main-eq} in the classical sense,
unless otherwise specified.
A function $u(t,x)$ is called an {\it entire solution} if it is a bounded and continuous function on $\RR\times\RR$ and satisfies \eqref{main-eq}
for $(t,x)\in\RR\times\RR$. An entire solution $u(t,x)$ is called {\it strictly positive} if $\inf_{(t,x)\in\RR\times\RR} u(t,x)>0$.
In the following, we may call a strictly positive entire solution  a {\it positive entire solution} if no confusion occurs.

Equation \eqref{main-eq} satisfying (H0) is called in literature a Fisher-KPP type  equation
 due to the pioneering papers of Fisher
\cite{Fisher} and Kolmogorov, Petrowsky, Piscunov \cite{KPP} on the following special case of \eqref{main-eq},
\begin{equation}
\label{classical-fisher-eq}
 \frac{\p u}{\p t}=\frac{\p ^ 2u}{\p x^2}+u(1-u),\quad\quad x\in \RR.
\end{equation}
It is clear that $u(t,x)\equiv 1$ is a unique strictly positive entire solution of \eqref{classical-fisher-eq}. Moreover, it is  globally stable with respect to
strictly positive initial data $u_0(x)$.

The existence, uniqueness, and stability of positive entire solutions of \eqref{main-eq} is one of the central problems about the dynamics of \eqref{main-eq}. Assume (H0). We show in this paper that

\medskip

\noindent $\bullet$ {\it  \eqref{main-eq} has a unique stable strictly  positive entire solution $u^+(t,x)$. If, in addition,
$f(t,x,u)$ is periodic in $t$ and/or $x$, then so is $u^+(t,x)$, and if $f(t,x,u)$ and $f_u(t,x,u)$ are almost periodic in $t$ and/or  $x$, then so if $u^+(t,x)$ }(see Theorem \ref{positive-solu-thm}).

 \medskip

 It should be pointed out that when  the dispersal is random, there are many studies on the positive entire solutions of \eqref{main-eq}
 for various special cases (see \cite{BeHaRoq, BeHaRos, CaCo, MiSh2, MiSh3, She8}, etc.). When the dispersal is nonlocal, there are also several studies on the positive entire solutions of \eqref{main-eq} for some special cases (see \cite{BaZh, BeCoVo, KoSh, RaSh, ShZh1, ShZh2},  etc.)
  It should also be pointed out that, in the random dispersal case, by the regularity and a priori estimates for parabolic equations
  (see \cite{Fri}), it is easy to prove the continuity of bounded solutions. In the nonlocal dispersal case, due to the lack of regularity
of solutions, the proof of the  continuity of bounded solutions is not trivial.
Thanks to the existence, uniqueness, and stability of a unique strictly
positive entire solution, \eqref{main-eq} is also said to be of {\it monostable type}.

The traveling wave problem is also among the central problems about the dynamics of \eqref{main-eq}.
This problem is  well understood for the classical Fisher or KPP equation \eqref{classical-fisher-eq}.
For example,
Fisher in
\cite{Fisher} found traveling wave solutions $u(t,x)=\phi(x-ct)$,
$(\phi(-\infty)=1,\phi(\infty)=0)$ of all speeds $c\geq 2$ and
showed that there are no such traveling wave solutions of slower
speed. He conjectured that the take-over occurs at the asymptotic
speed $2$. This conjecture was proved in \cite{KPP}  {for some special initial distribution and was proved in \cite{ArWe2} for the general case.
 More precisely, it is proved
in \cite{KPP} that for the  nonnegative solution $u(t,x)$ of \eqref{classical-fisher-eq} with
$u(0,x)=1$ for $x<0$ and $u(0,x)=0$ for $x>0$, $\lim_{t\to \infty}u(t,ct)$ is $0$ if $c>2$ and $1$ if $c<2$. It is proved
in \cite{ArWe2} that for any
nonnegative solution $u(t,x)$ of (\ref{classical-fisher-eq}), if at
time $t=0$, $u$ is $1$ near $-\infty$ and $0$ near $\infty$, then
$\lim_{t\to \infty}u(t,ct)$ is $0$ if $c>2$ and $1$ if $c<2$.} Put $c^*=2$. $c^*$ is of the following spatially
spreading property: for any nonnegative solution $u(t,x)$ of
\eqref{classical-fisher-eq}, if at time $t=0$, $u(0,x)\geq \sigma$
for some $\sigma>0$ and $x\ll -1$  and $u(0,x)=0$ for $x\gg 1$, then
$$
\inf_{x\leq  c^{'}t}|u(t,x)- 1|\to 0\quad \forall c^{'}<c^*\quad {\rm and}\quad \sup_{x\geq
c^{''}t}u(t,x)\to 0 \quad\forall c^{''}>c^* \quad {\rm as}\quad t\to\infty.
$$
In
literature, $c^*$ is hence  called the {\it
spreading speed} for (\ref{classical-fisher-eq}).
The results on traveling wave solutions of \eqref{classical-fisher-eq} have been well extended to general time and space independent
monostable equations (see \cite{ArWe1}, \cite{ArWe2}, \cite{Bra}, \cite{Ham},  \cite{Kam}, \cite{Sat}, \cite{Uch}, etc.).

Due to the inhomogeneity of the underlying media of  biological
models in nature,  the investigation of
the traveling wave problem for time and/or space dependent
dispersal evolution equations is gaining  more and more attention. The notion of transition waves or generalized traveling waves
 has been introduced for dispersal evolution equations with  general time and space
 dependence (see Definition \ref{transition-wave-def} and Remark \ref{transition-wave-rk}), which naturally extends the notion of traveling wave solutions for time and space independent dispersal evolution
equations  to  the equations with  general time and space
 dependence.
 A huge amount of research has been
carried out toward the transition waves or generalized traveling waves of
various time and/or space dependent monostable equations. See, for example,
 \cite{BeHaNa1, BeHaNa2, BeHaRo, BeHa07, BeHa12, FrGa, Ham, HePaSt, HuSh,  HuZi1,
LiYiZh, LiZh, LiZh1, Mat, Nad, NaRo1, NaRo2, NaRo3, NoRoRyZl, NoRuXi, NoXi, She6, She7, She8, TaZhZl, Wei1, Wei2, Xin1, Zla},  and references therein for
space and/or time dependent Fisher-KPP
type equations with random dispersal, and see, for example, \cite{CaCh, CoDu, CoDaMa, LiZl, RaShZh, ShSh1, ShSh2, ShZh1, ShZh2}, and references therein
for space and/or time dependent Fisher-KPP type equations with nonlocal dispersal.

 {It should be pointed out that the works \cite{HaRo}, \cite{She7}, \cite{ShSh1}, and \cite{ShZh1} considered the stability of transition waves in spatially periodic and time independent or spatially homogeneous and time
  dependent Fisher-KPP type equations with random and nonlocal dispersal. In particular,   the stability of transition  waves in spatially periodic and time independent Fisher-KPP type equations with random dispersal (resp. nonlocal dispersal) is studied in \cite{HaRo} (resp. \cite{ShZh1}) and the stability of transition waves in  spatially homogeneous and time almost periodic  Fisher-KPP type equations with random dispersal
  (resp. nonlocal dispersal) is investigated in \cite{She7} (resp. \cite{ShSh1}).  The paper \cite{HaRo} also considered the stability of traveling waves in spatially and temporally periodic Fisher-KPP equations with random dispersal (see \cite[Section 1.4]{HaRo}).}

However, as long as the equations  depend on both time and space variables non-periodically,  all the existing
works are on the existence of transition waves or generalized traveling waves and there is little on the stability of transition
waves.  In the current paper, we consider
 the stability of transition waves of Fisher-KPP equations with general time and space dependence
(see Definition \ref{transition-wave-def} for the definition of transition waves). We show that

\medskip

\noindent $\bullet$
{\it Any transition wave of \eqref{main-eq} connecting $u^+(t,x)$ and $0$ and  satisfying certain decaying property near $0$ is asymptotically stable { for well-fitted perturbation}}
(see Theorem \ref{main-thm-general-case} for detail).

\medskip

{We point out that the existence of transition waves of \eqref{main-eq} with non-periodic time and/or space dependence  has been studied in \cite{LiZl, NaRo1, NaRo2, NaRo3, She7, She8, ShSh1, Zla}. Applying the above stability result for general transition waves of \eqref{main-eq}, we prove}

\medskip

\noindent $\bullet$ {\it The non-critical transition waves  established in \cite{LiZl, NaRo1, NaRo2, NaRo3, She7, She8, ShSh1, Zla}
are asymptotically stable {for well-fitted perturbation}}
(see Theorem \ref{main-thm-special-case} and Remark \ref{special-case-rk1} for detail).

\medskip

Up to the author's knowledge,  it is the first time that the stability of transition waves of Fisher-KPP type equations with general time and space dependence
is studied.
Among the technical tools used in the proofs of the main results are spectral theory for
linear  dispersal evolution equations with time and space dependence, comparison principle, and the very nontrivial application of the so called part metric.

The rest of the paper is organized as follows. In section 2, we will introduce the standing notations, definitions,  and state the main results
of the paper. We study the existence, uniqueness and stability of positive entire solutions in section 3. Sections 4 and 5 are devoted to the proofs of the main results
 on transition waves.

\medskip

\section{Notations, definitions,  and main results}

In this section, we  introduce the standing notations, definitions,  and state the main results
of the paper. Throughout this section, we assume that (H0) holds.

First of all,  we  recall the definition of almost periodic
functions.

\begin{definition}[Almost periodic function]
\label{almost-periodic-def}
\begin{itemize}

\item[(1)] A continuous function $g:\RR\to \RR$ is called {\rm almost periodic} if
for any $\epsilon>0$, the set
$$
T(\epsilon)=\{\tau\in\RR\,|\, |g(t+\tau)-g(t)|<\epsilon\,\, \, \text{for all}\,\, t\in\RR\}
$$
is relatively dense in $\RR$.

\item[(2)] Let $g(t,x,u)$ be a continuous function of $(t,x,u)\in\RR\times\RR^m\times\RR^n$. $g$ is said to be {\rm almost periodic in $t$ uniformly with respect to $x\in\RR^m$ and
$u$ in bounded sets} if
$g$ is uniformly continuous in $t\in\RR$,  $x\in\RR^m$, and $u$ in bounded sets and for each $x\in\RR^m$ and $u\in\RR^n$, $g(t,x,u)$ is almost periodic in $t$.

\item[(3)] For a given almost periodic function $g(t,x,u)$, the hull $H(g)$ of $g$ is defined by
\begin{align*}
H(g)=\{\tilde g(\cdot,\cdot,\cdot)\,|\, & \exists t_n\to\infty \,\,\text{such that}\,\, g(t+t_n,x,u)\to \tilde g(t,x,u)\,\, \text{uniformly in}\,\, t\in\RR\,\, {\rm and}\\
&
\,\, (x,u)\,\, \text{in bounded sets}\}.
\end{align*}
\end{itemize}
\end{definition}

\begin{remark}
\label{almost-periodic-rk}
Let $g(t,x,u)$ be a continuous function of $(t,x,u)\in\RR\times\RR^m\times\RR^n$. $g$ is almost periodic in $t$ uniformly with respect to
$x\in\RR^m$ and $u$ in bounded sets  if and only if
 $g$ is uniformly continuous in $t\in\RR$,  $x\in\RR^m$, and $u$ in bounded sets and for any sequences $\{\alpha_n^{'}\}$,
$\{\beta_n^{'}\}\subset \RR$, there are subsequences $\{\alpha_n\}\subset\{\alpha_n^{'}\}$, $\{\beta_n\}\subset\{\beta_n^{'}\}$
such that
$$
\lim_{n\to\infty}\lim_{m\to\infty}g(t+\alpha_n+\beta_m,x,u)=\lim_{n\to\infty}g(t+\alpha_n+\beta_n,x,u)
$$
for each $(t,x,u)\in\RR\times\RR^m\times\RR^n$ (see \cite[Theorems 1.17 and 2.10]{Fin}).
\end{remark}

Next, let
$$C_{\rm uinf}^b(\RR)=\{u\in C(\RR,\RR)\,|\, u(x)\,\,\,\text{is uniformly continuous and bounded on}\,\, \RR\}
$$
endowed with the norm $\|u\|_\infty=\sup_{x\in\RR}|u(x)|$.
By general semigroup theory (see \cite{Hen}), for any $u_0\in C_{\rm unif}^b(\RR)$, \eqref{main-eq} has a unique (local) solution
 $u(t,x;t_0,u_0)$ with $u(t_0,x;t_0,u_0)=u_0(x)$ for $x\in\RR$.

 We then consider the existence, uniqueness, and stability of strictly positive entire solutions of \eqref{main-eq}.
The following is the main result of the paper on the existence, uniqueness, and stability of strictly positive entire solution of \eqref{main-eq}.

\begin{theorem}
\label{positive-solu-thm} There is a unique bounded  strictly
positive entire solution $u^+(t,x)$ of \eqref{main-eq}  with
$u^+(t,x)$ being uniformly continuous in $(t,x)\in\RR\times \RR$.
Moreover, for any given $u_0\in C_{\rm unif}^b(\RR)$ with
$\inf_{x\in\RR}u_0(x)>0$,
$$
\lim_{t\to\infty}\|u(t+t_0,\cdot;t_0,u_0)-u^+(t+t_0,\cdot)\|_\infty=0
$$
 uniformly in $t_0\in\RR$. If, in addition, $f(t,x,u)$ is periodic
in $t$ (resp. periodic in $x$), then so is $u^+(t,x)$. If $f(t,x,u)$ and $f_u(t,x,u)$
are almost periodic in $t$ (resp.\ almost periodic in $x$), then so
is $u^+(t,x)$.
\end{theorem}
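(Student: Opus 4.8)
The plan is to proceed in the natural logical order: uniform a priori bounds, construction of $u^+$ as a monotone limit, global stability via the part metric, and then uniqueness, regularity, and the symmetry properties as consequences.

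First I would establish two a priori estimates. For the upper bound, observe that any constant $c\ge P_0$ is a stationary spatially homogeneous supersolution, since $\mathcal{A}c=0$ for both dispersal operators and $cf(t,x,c)\le -\beta_0 c<0$ by (H0); the comparison principle then gives $u(t,\cdot;t_0,u_0)\le\max\{P_0,\|u_0\|_\infty\}$ and, refining the barrier on excursions above $P_0$, the ultimate bound $\limsup_{t\to\infty}\sup_x u\le P_0$. For the lower bound I would use \eqref{condition-eq2}: near $u=0$ the effective growth rate is $f(t,x,0)$, whose long-time lower average is positive, so a subsolution/uniform-persistence argument produces $\delta_0>0$ and $T_0>0$ with $\inf_x u(t_0+s,\cdot;t_0,u_0)\ge\delta_0$ for all $s\ge T_0$, uniformly in $t_0$ and in positive data bounded below. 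These two facts confine every positive orbit, asymptotically, to the order interval $[\delta_0,P_0]$, which is precisely what renders the part metric finite and bounded.

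Next I would construct $u^+$. Setting $u_n(t,x)=u(t,x;-n,P_0)$, the supersolution property gives $u_{n+1}(-n,\cdot)\le P_0$, so comparison on $\{t\ge -n\}$ yields $u_{n+1}\le u_n$; the sequence is decreasing in $n$ and, by persistence, $\ge\delta_0$ once $t+n\ge T_0$. Compactness produces the limit: in the random-dispersal case parabolic regularity and Schauder estimates (\cite{Fri}) give local $C^{1,2}$ bounds, while in the nonlocal case the integro-ODE structure gives Lipschitz-in-$t$ bounds and uniform-in-$x$ continuity inherited from $\kappa$ and the Lipschitz dependence of $f$ on $x$; in either case $u_n\to u^+$ with $\delta_0\le u^+\le P_0$, $u^+$ an entire solution, and the same estimates give uniform continuity. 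The core is then global stability. Given $u_0$ with $\inf_x u_0>0$, choose $\beta\ge1$ with $\beta^{-1}u^+(t_0,\cdot)\le u_0\le\beta u^+(t_0,\cdot)$; using $f_u\le-\beta_0<0$ one checks that $\beta u^+$ is a supersolution and $\beta^{-1}u^+$ a subsolution, so the part metric $\alpha(t)=\inf\{\beta\ge1:\beta^{-1}u^+(t,\cdot)\le u(t,\cdot)\le\beta u^+(t,\cdot)\}$ is nonincreasing. The decisive step is to show $\alpha(t)\to1$ with a rate uniform in $t_0$: the strict gap $f(t,x,u^+)-f(t,x,\beta u^+)\ge\beta_0(\beta-1)u^+$ (and its subsolution analogue), together with the bounds $\delta_0\le u^+\le P_0$ and the mixing provided by $\mathcal{A}$, yields a uniform contraction of $\alpha$ on each unit time interval, giving the stated convergence. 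Since this rate depends only on $\delta_0,P_0$ and the constants of (H0), uniqueness follows: for any bounded strictly positive entire solution $u_1$ one writes $u_1(s,\cdot)=u(s,\cdot;s-t,u_1(s-t,\cdot))$ and lets $t\to\infty$ (so $s-t\to-\infty$), forcing $\|u_1(s,\cdot)-u^+(s,\cdot)\|_\infty=0$ for every $s$, i.e.\ $u_1\equiv u^+$.

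Finally, the symmetry properties follow from uniqueness and the hull. If $f$ is $T$-periodic in $t$, then $u^+(t+T,x)$ is again a bounded strictly positive entire solution, so uniqueness gives $u^+(t+T,x)=u^+(t,x)$; periodicity in $x$ is identical, using translation invariance of $\mathcal{A}$. For almost periodicity I would work in $H(f)$: every $g\in H(f)$ inherits (H0) and hence has its own unique entire solution $u_g^+$, and the uniform stability estimates make $g\mapsto u_g^+$ continuous; applying the two-sided sequential characterization of Remark \ref{almost-periodic-rk}, one extracts from shifts $f(\cdot+\alpha_n,\cdot,\cdot)$ convergent subsequences, notes $u^+(\cdot+t_n,\cdot)\to u_g^+$ by continuous dependence, and transfers the double-limit equality to $u^+$; the requirement that $f_u$ (not only $f$) be almost periodic is what keeps the linearized, part-metric contraction estimates stable under hull limits. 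I expect the genuinely hard part to be the uniform part-metric contraction $\alpha(t)\to1$, especially in the nonlocal case, where the absence of spatial smoothing forces one to extract the contraction directly from $\kappa$ together with the uniform bounds $\delta_0$ and $P_0$.
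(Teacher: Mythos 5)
Your overall route is essentially the paper's: iterates launched from constant data, the part metric as the contraction mechanism giving stability uniform in $t_0$, uniqueness by writing an entire solution as $u_1(s,\cdot)=u(s,\cdot;s-t,u_1(s-t,\cdot))$ and letting $t\to\infty$, periodicity from uniqueness, and the hull/double-limit criterion of Remark \ref{almost-periodic-rk} (with $f_u$ included so hull limits still satisfy (H0)) for almost periodicity. All of that part of your sketch is sound and matches the paper.

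The genuine gap is in the nonlocal case, at the step ``Compactness produces the limit \dots uniform-in-$x$ continuity inherited from $\kappa$ and the Lipschitz dependence of $f$ on $x$; in either case $u_n\to u^+$ \dots and the same estimates give uniform continuity.'' The nonlocal semigroup has no smoothing, and the modulus of continuity in $x$ that can be propagated along a solution degrades exponentially with elapsed time: for $v(t,x)=u(t,x+h;-n,P_0)-u(t,x;-n,P_0)$ the Duhamel/Gronwall bound has the form $|v|\le C e^{L(t+n)}h$, where $L$ is an upper bound for the linearized coefficient $f+uf_u$; this coefficient is roughly $f(t,x,0)-2\beta_0 u$, which need not be negative on $[\delta_0,P_0]$, so $L>0$ in general. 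Hence the family $\{u_n(t,\cdot)\}_n$ is \emph{not} equicontinuous for fixed $t$, Arzel\`a--Ascoli does not apply, and the pointwise monotone limit $u^+$ could a priori fail to be even continuous in $x$ --- precisely the difficulty the paper flags (``due to the lack of regularity of solutions, the proof of the continuity of bounded solutions is not trivial''). As written, the uniform continuity asserted in the theorem is unproven, and this also undercuts your later steps that feed $u^+(t_0,\cdot)$ back into the $C_{\rm unif}^b$ solution theory.

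The paper closes this by making the approximation \emph{uniform in the starting time}: iterates from $u_\delta\equiv\delta$ below and $u_M\equiv M$ above are squeezed together by the part metric uniformly in the window index (see \eqref{continuity-eq0}--\eqref{continuity-eq4}), so $u^+(t,\cdot)$ is, uniformly in $t$, within $\epsilon$ of a solution launched from constant data only a fixed finite time $KT$ earlier; on such a finite window the Gronwall estimate gives a modulus $C(\epsilon)h$, whence uniform continuity. Your one-sided construction can be repaired with a tool you already invoke for stability: apply the part-metric contraction to pairs $(u_n,u_m)$ to obtain $\sup_t\|u^+(t,\cdot)-u(t,\cdot;t-S,P_0)\|_\infty<\epsilon$ for a fixed $S=S(\epsilon)$, and then run the finite-window Lipschitz estimate on $[t-S,t]$. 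But appealing to ``compactness'' in the nonlocal setting is a step that fails.
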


We now consider transition waves of \eqref{main-eq} connecting $u^+(t,x)$ and $0$.

\begin{definition}
\label{transition-wave-def}
\begin{itemize}
\item[(1)]
 An entire solution $u=U(t,x)$ of \eqref{main-eq} is called a  {\rm transition wave} (connecting $0$ and $u^+(\cdot,\cdot)$)
  if $U(t,x)\in(0,u^+(t,x))$ for all $(t,x)\in\RR\times\RR$,  and there exists a function $X:\RR\to\RR$, called {\rm interface location function}, such that
\begin{equation*}
\lim_{x\to-\infty}U(t,x+X(t))=u^+(t,x+X(t))\,\,\text{and}\,\,\lim_{x\to\infty}U(t,x+X(t))=0\,\,\text{uniformly in}\,\,t\in\R.
\end{equation*}

\item[(2)] {Assume that $u(t,x)=U(t,x)$ is a transition wave of \eqref{main-eq} with $X:\RR\to\RR$ being an interface location function. $c:=\liminf_{t-s\to\infty,t>s}\frac{X(t)-X(s)}{t-s}$ is called
the {\rm least mean speed} of the transition wave. If $\lim_{t-s\to\infty, t>s}\frac{X(t)-X(s)}{t-s}$ exists, $c:= \lim_{t-s\to\infty, t>s}\frac{X(t)-X(s)}{t-s}$ is called {\rm the average speed or mean speed} of the transition wave.}

\item[(3)] {Assume that $u(t,x)=U(t,x)$ is a transition wave of \eqref{main-eq}. It is called {\rm asymptotically stable} if for any $t_0\in\RR$ and $u_0\in C_{\rm unif}^b(\RR)$ satisfying that
$u_0(x)>0$ for all $x\in\RR$ and
\begin{equation}
\label{cond-eq3}
\inf_{x\le x_0}u_0(x)>0\quad \forall\,\, x_0\in\RR,\quad \lim_{x\to\infty}\frac{u_0(x)}{U(t_0,x)}=1,
\end{equation}
there holds
\begin{equation}
\label{cond-eq4}
\lim_{t\to\infty}\|\frac{u(t+t_0,\cdot;t_0,u_0)}{U(t+t_0,\cdot)}-1\|_{C_{\rm unif}^b(\RR)}=0.
\end{equation}}
\end{itemize}
\end{definition}

\begin{remark}
\label{transition-wave-rk}
\begin{itemize}
\item[(1)]  The interface location function $X(t)$ of a transition wave $u=U(t,x)$  tells the position of the transition front of $U(t,x)$ as time $t$ elapses, while the uniform-in-$t$ limits (the essential property in the definition) shows the \textit{bounded interface width}, that is,
\begin{equation}
\label{interface-width-eq}
\forall\,\,0<\epsilon_{1}\leq\epsilon_{2}<1,\quad\sup_{t\in\RR}{\rm diam}\{x\in\RR|\epsilon_{1}\leq U(t,x)\leq\epsilon_{2}\}<\infty.
\end{equation}
Notice, if $\xi(t)$ is a bounded function, then $X(t)+\xi(t)$ is also an interface location function. Thus, interface location function is not unique. But, it is easy to check that if $Y(t)$ is another interface location function, then $X(t)-Y(t)$ is a bounded function. Hence, interface location functions are unique up to addition by bounded functions {and the least mean speed of a transition wave is well defined}.

\item[(2)] When $f(t+T,x,u)=f(t,x+p,u)=f(t,x,u)$, an entire solution $u=U(t,x)$ of \eqref{main-eq} is called a {\rm periodic traveling wave solution}
with speed $c$ and connecting $u^+(t,x)$ and $0$
if there is $\Phi(x,t,y)$ such that
$$
U(t,x)=\Phi(x-ct,t,ct),
$$
$$
\Phi(x,t+T,y)=\Phi(x,t,y+p)=\Phi(x,t,y),
$$
and
$$
\lim_{x\to -\infty} \Big(\Phi(x,t,y)-u^+(t,x+y)\Big)=0,\quad \lim_{x\to\infty} \Phi(x,t,y)=0
$$
uniformly in $t\in\RR$ and $y\in\RR$. It is clear that if $u=U(t,x)$ is a periodic traveling wave solution, then it is a transition wave.

\item[(3)] When $f(t,x,u)$ is almost periodic in $t$ and periodic in $x$ with period $p$, an entire solution $u=U(t,x)$ of \eqref{main-eq} is
called an {\rm almost periodic traveling wave solution} with average speed $c$ and connecting $u^+(t,x)$ and $0$ if there are $\xi(t)$ and $\Phi(x,t,y)$ such that
$$
U(t,x)=\Phi(x-\xi(t),t,\xi(t)),
$$
$$
\Phi(x,t,y)\quad \text{is almost periodic in}\,\, t\,\, \text{and periodic in}\,\, y,
$$
$$
\lim_{x\to -\infty} \Big(\Phi(x,t,y)-u^+(t,x+y)\Big)=0,\quad \lim_{x\to\infty} \Phi(x,t,y)=0
$$
uniformly in $t\in\RR$ and $y\in\RR$, and
$$
\lim_{t-s\to\infty}\frac{\xi(t)-\xi(s)}{t-s}=c.
$$
It is clear that if $u=U(t,x)$ is an almost  periodic traveling wave solution, then it is a transition wave.

\item[(4)] The reader is referred to \cite{BeHa07, BeHa12} for the introduction of the notion of transition waves in the general case, and
 to \cite{Mat, She4, She7, She8} for the time almost periodic or space almost periodic cases.

 \item[(5)] In the case that $\mathcal{A}u=u_{xx}$, by the regularity and a priori estimates for parabolic equations, any continuous transition wave $u=U(t,x)$ of \eqref{main-eq} is uniformly continuous in $(t,x)\in\RR\times\RR$. In the case that
     $\mathcal{A}u(t,x)=\int_{\RR}\kappa(y-x)u(t,y)dy-u(t,x)$, it is proved in \cite{ShSh2} that a transition wave $u=U(t,x)$ of \eqref{main-eq}
     is uniformly continuous under quite general conditions.
\end{itemize}
\end{remark}

We have the following general theorem on the stability of transition waves of \eqref{main-eq}.

\begin{theorem}
\label{main-thm-general-case}
Assume that $u=U(t,x)$ is a transition wave of \eqref{main-eq} with interface location $X(t)$  satisfying the following properties: $U(t,x)$ is uniformly continuous in $(t,x)\in\RR\times\RR$,
\begin{equation}
\label{cond-eq-2}
\forall\,\,\tau>0,\quad \sup_{t,s\in\RR,|t-s|\le\tau}|X(t)-X(s)|<\infty,
\end{equation}
and there are positive continuous functions $\phi(t,x)$ and $\phi_1(t,x)$ such that
\begin{equation}
\label{cond-eq-1}
\liminf_{x\to-\infty}\phi(t,x)=\infty,\quad \liminf_{x\to -\infty}\phi_1(t,x)=\infty, \quad \lim_{x\to\infty}\phi(t,x)=0,\quad \lim_{x\to\infty}\phi_1(t,x)=0,
\end{equation}
\begin{equation}
\label{cond-eq0}
\lim_{x\to -\infty}\frac{\phi(t,x+X(t))}{\phi_1(t,x+X(t))}=0,\quad \lim_{x\to\infty}\frac{\phi_1(t,x+X(t))}{\phi(t,x+X(t))}=0
\end{equation}
exponentially, and the second limit in \eqref{cond-eq0} is  uniformly in $t$;
\begin{equation}
\label{cond-eq1}
d^*\phi(t,x)-d_1^* \phi_1(t,x)\le U(t,x)
\le d^*\phi(t,x)+d_1^*\phi_1(t,x)
\end{equation}
for some $d^*,d_1^*>0$ and all $t,x\in\RR$; and
 for any given $t_0\in\RR$ and $u_0\in C_{\rm unif}^b(\RR)$ with $u_0(x)\ge 0$, if
$$
 u_0(x)\ge d\phi(t_0,x)-d_1\phi_1(t_0,x)\quad \Big({\rm resp., }\,\,
u_0(x)\le d \phi(t_0,x)+d_1
\phi_1(t_0,x)\Big)
$$
for some $0<d<2d^*$, $d_1\gg 1$, and all $x\in\RR$, then
\begin{equation}
\label{cond-eq2}
 u(t,x;t_0,u_0)\ge d\phi(t,x)-d_1 \phi_1(t,x)\quad \Big({\rm resp., }\,\, u(t,x;t_0,u_0) \le  d
\phi(t,x)+d_1 \phi_1(t,x)\Big)
\end{equation}
for all $t\ge t_0$ and $x\in\RR$.
Then the transition wave $u=U(t,x)$ is asymptotically stable.
\end{theorem}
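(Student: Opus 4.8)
The plan is to prove the two one-sided relative estimates
$$\limsup_{t\to\infty}\sup_{x\in\RR}\frac{u(t+t_0,x;t_0,u_0)}{U(t+t_0,x)}\le 1\quad\text{and}\quad\liminf_{t\to\infty}\inf_{x\in\RR}\frac{u(t+t_0,x;t_0,u_0)}{U(t+t_0,x)}\ge 1,$$
whose combination is exactly \eqref{cond-eq4}. I abbreviate $u(t,x)=u(t,x;t_0,u_0)$ and work with the part metric $\rho(v,w)=\inf\{\ln\alpha:\alpha\ge1,\ \alpha^{-1}w\le v\le\alpha w\}$ on positive functions. The decisive structural input is that $f_u\le-\beta_0<0$ makes $u\mapsto uf(t,x,u)$ strictly subhomogeneous: for $\alpha>1$ one has $f(t,x,\alpha v)\le f(t,x,v)-\beta_0(\alpha-1)v$, so $\alpha U$ is a strict supersolution of \eqref{main-eq} whenever $U$ is a solution, the defect being bounded below by $\beta_0(\alpha-1)\alpha U^2$. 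This is what forces the part metric to contract strictly precisely where the solutions are bounded away from $0$, and to degenerate only at the leading edge where $U\to0$.

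First I would build global barriers from the comparison hypothesis. Using $\inf_{x\le x_0}u_0>0$, the normalization $u_0(x)/U(t_0,x)\to1$ at $+\infty$, and the sandwich \eqref{cond-eq1}, for each small $\epsilon\in(0,1)$ I can choose $d_\pm=(1\pm\epsilon)d^*\in(0,2d^*)$ and $d_1\gg1$ so that $d_-\phi(t_0,\cdot)-d_1\phi_1(t_0,\cdot)\le u_0\le d_+\phi(t_0,\cdot)+d_1\phi_1(t_0,\cdot)$ on all of $\RR$: at $+\infty$ this follows from $u_0\approx U$ and \eqref{cond-eq1}, while near $-\infty$ and on bounded sets it follows from $\phi,\phi_1\to\infty$, the positivity and continuity of $\phi,\phi_1$, and the lower bound on $u_0$, after enlarging $d_1$. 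The comparison hypothesis \eqref{cond-eq2} then propagates these to $d_-\phi(t,x)-d_1\phi_1(t,x)\le u(t,x)\le d_+\phi(t,x)+d_1\phi_1(t,x)$ for all $t\ge t_0$, $x\in\RR$.

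Next I would use these barriers to control the leading edge. Dividing by $U$ and invoking the uniform decay $\phi_1/\phi\to0$ from \eqref{cond-eq0} together with \eqref{cond-eq1}, for any $\delta>0$ there is $L=L(\delta)$, independent of $t$, with $1-\epsilon-\delta\le u(t,x)/U(t,x)\le 1+\epsilon+\delta$ whenever $x\ge X(t)+L$ and $t\ge t_0$; here \eqref{cond-eq-2} guarantees that the moving threshold $X(t)+L$ behaves consistently in $t$. Thus on the leading region the relative deviation is already trapped in a band of width $O(\epsilon+\delta)$ for all times. It remains to treat the trailing-and-bulk region $\{x\le X(t)+L\}$, on which the transition-wave structure and \eqref{cond-eq-2} give $\inf_{t,\,x\le X(t)+L}U(t,x)=:c_L>0$, while a persistence argument (propagating $\inf_{x\le X(t_0)+L}u_0>0$ forward using the subhomogeneous dynamics and \eqref{condition-eq2}) yields a uniform positive lower bound for $u$ there after some time. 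On this region $u$ and $U$ are comparable to positive constants, so the part metric between them is finite.

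The heart of the argument, and the step I expect to be hardest, is to upgrade finiteness of the part metric to convergence to $1$ on the bulk, reconciled with the leading-edge band. I would argue by contradiction and compactness: if $\rho(u(t+t_0,\cdot),U(t+t_0,\cdot))$, computed over the region where it is finite, did not tend to its infimum $0$, I would pass to time translates $t_n\to\infty$ and extract — using the uniform continuity of $U$, a priori regularity estimates for $u$, and the local boundedness \eqref{cond-eq-2} of $X$ to control the interface — locally uniform limits $u_\infty,U_\infty$ solving a limiting Fisher-KPP equation of the same type, with limiting part metric constant in time and equal to some $\rho_\infty>0$. Strict subhomogeneity forces the optimal ratio realizing $\rho_\infty$ to be attained only where $U_\infty\to0$, i.e.\ at the leading edge; but there the barrier band pins the ratio within $\ln(1+\epsilon+\delta)$ of $0$, so $\rho_\infty\le\ln(1+\epsilon+\delta)$, and since $\epsilon,\delta>0$ are arbitrary, $\rho_\infty=0$. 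Combining the vanishing of the bulk part metric with the leading-edge band and with the convergence of both $u$ and $U$ to the common positive entire solution $u^+$ at the far trailing edge (via Theorem \ref{positive-solu-thm}) gives $\sup_{x}|u(t+t_0,x)/U(t+t_0,x)-1|\to0$, which is \eqref{cond-eq4}. The delicate points throughout are the non-compactness of $\RR$ and the degeneration of the contraction at the leading edge; both are absorbed by the quantitative subhomogeneity defect $\beta_0(\alpha-1)\alpha U^2$ and the uniformity in $t$ of the second limit in \eqref{cond-eq0}.
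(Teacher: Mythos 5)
Your proposal is correct in its overall strategy and in fact shares the paper's skeleton: the global sandwich of $u_0$ between $d^*(1\pm\epsilon)\phi\mp d_1\phi_1$ propagated by \eqref{cond-eq2}, the resulting uniform-in-time band for $u/U$ ahead of the moving interface (the paper's \eqref{eqq-1}), the uniform positive lower bound for $U$ behind the interface, the strict subhomogeneity coming from $f_u\le-\beta_0$, and the monotonicity of the part metric (Proposition \ref{part-metric-prop}(1)). Where you genuinely diverge is in the decisive contraction step. The paper argues quantitatively: assuming $\rho(u(t+t_0,\cdot),U(t+t_0,\cdot))\ge\epsilon_0$ for all $t\ge 0$, it writes $\bar u=\rho U-u$ as a solution of a linear equation with a forcing term $b(t,x)\ge b_0>0$ behind the interface (this is exactly your subhomogeneity defect), and then uses the heat kernel (random case) or a direct ODE bound (nonlocal case) to show the part metric drops by a fixed $\delta>0$ on every time interval of length $\tau$, uniformly in $s$ (inequality \eqref{main-thm-eq1}), which contradicts boundedness below. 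You instead run a soft argument: pass to time--space translates along near-maximizing points of the ratio, extract limits $u_\infty,U_\infty$ of a limiting equation, and invoke a strong-maximum-principle contradiction at an interior point where $U_\infty\ge c_L>0$, concluding $\rho_\infty\le\ln(1+\epsilon+\delta)$ and hence, $\epsilon$ being arbitrary, $\rho_\infty=0$. Both finishes are valid; the paper's buys uniformity for free and, crucially, never needs compactness of the solution $u$ itself, while yours is conceptually cleaner but pays for it in prerequisites, as noted next. (A minor simplification available to you: the part metric is globally finite here by \eqref{cond-eq3} and the structure of $U$, so there is no need to restrict it to a region, and once it tends to $0$ the conclusion \eqref{cond-eq4} is immediate --- your final appeal to convergence to $u^+$ at the trailing edge is superfluous.)

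Two steps you treat as automatic actually require proofs. First, the bound $\inf\{U(t,x): t\in\RR,\ x\le X(t)+L\}=c_L>0$ does not follow merely from ``the transition-wave structure and \eqref{cond-eq-2}'': the definition of a transition wave controls $U$ only as $x-X(t)\to-\infty$, uniformly in $t$, and says nothing uniform about the intermediate region. This is precisely the paper's claim \eqref{eqq-5-0}, proved there by a compactness argument using the assumed uniform continuity of $U$, translation limits of $f$, and the comparison principle dichotomy (a nonnegative limiting solution is either identically zero or everywhere positive). Second, your extraction of locally uniform limits of the translates of $u$ needs a priori regularity of $u$; in the random dispersal case this is standard parabolic theory, but in the nonlocal case $\mathcal{A}u=\int_\RR\kappa(y-x)u(t,y)\,dy-u(t,x)$ there is no smoothing, and uniform continuity of $u(t,\cdot)$ must be propagated from $u_0$ by a Gronwall-type argument (as the paper does for $u^+$ in Step 1 of the proof of Theorem \ref{positive-solu-thm}). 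Neither issue is fatal --- both facts are true under the theorem's hypotheses and provable with the compactness tools you already deploy --- but as written your proof assumes them, and avoiding the second one entirely is the main practical advantage of the paper's quantitative route.
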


It should be pointed out that, when $\mathcal{A}u=u_{xx}$,
by \cite[Proposition 4.2]{HaRos}, \eqref{cond-eq-2} holds for any transition wave of \eqref{main-eq}.
In the above theorem, the existence of transition waves is assumed. {The existence of transition waves of \eqref{main-eq} with
non-periodic time and/or space dependence has been studied in \cite{LiZl, NaRo1, NaRo2, NaRo3, She7, She8, ShSh1, Zla}. Applying
Theorem \ref{main-thm-general-case} or the arguments in the proof of Theorem \ref{main-thm-general-case}, we can establish the asymptotic stability of the transition waves
 proved in \cite{LiZl, NaRo1, NaRo2, NaRo3, RaShZh, She7, She8,  ShSh1, Zla}. For convenience, we introduce the following assumptions.}

\medskip

\noindent {\bf (H1)} {$f(t,x+p,u)=f(t,x,u)$ for some $p>0$, $f(t,x,1)=0$, $f(t,x,u)\le f(t,x,0)$ for $0\le u\le 1$, $\inf_{(t,x)\in\RR\times\RR} f(t,x,u)>0$ for $u\in (0,1)$, and
$f(t,x,u)\ge f(t,x,0)u-C u^{1+\nu}$ for some $C>0$, $\delta,\nu\in (0,1]$, $u\in (0,\delta)$.}

\medskip

\noindent {\bf (H2)} {$f(t,x,u)=a(x)(1-u)$, $\inf_{x\in\RR} a(x)>0$,  $a(x)$ is almost periodic in $x$, and there
exists an almost periodic positive function $\phi\in C^2(\RR)$  such that $\phi_{xx}+a(x)\phi(x)=\lambda_0\phi(x)$ for $x\in\RR$, where
$$
\lambda_0=\inf\{\lambda\in\RR\,|\, \exists \phi\in C^2(\RR),\,\, \phi>0,\,\, \phi_{xx}+a(x)\phi(x)\le\lambda \phi(x)\,\, {\rm for}\,\, x\in\RR\}.
$$}

\vspace{-0.1in} \noindent {\bf (H3)} {$f(t,x,u)=f(x,u)$, $f(x,1)=0$, $f_u(x,u)<0$,
$$
a(x)g(u)\le uf(x,u)\le a(x) u,\quad u\in [0,1],
$$
$$
g\in C^1([0,1]),\,\, g(0)=g(1)=0,\,\, g^{'}(0)=1,\,\,\, 0\le g(u)\le u\,\, {\rm for}\,\, u\in (0,1),
$$
$$
\int_0^1 \frac{u-g(u)}{u^2}du<\infty,\quad \Big(\frac{g(u)}{u}\Big)^{'}<0\,\, {\rm for}\,\, u\in (0,1),
$$
and
$$
0< a_-:=\inf a(x)\le \sup a(x):=a_+<\infty.
$$ }

{Observe that, assuming one of (H1), (H2),  and (H3),  $u^+(t,x)\equiv 1$. We have the following theorem on  the asymptotic stability of the transition waves
 established in \cite{NaRo1, NaRo2, NaRo3, RaShZh, She7, She8,  Zla}.}

\begin{theorem}
\label{main-thm-special-case}
\begin{itemize}

\item[(1)] {Assume that  $\mathcal{A}u=u_{xx}$ and that $f(t,x,u)$ satisfies (H1). Then there is $c^*\in\RR$ such that for any $c>c^*$,
  \eqref{main-eq}  has an asymptotically stable transition wave with least mean speed $c$.}

 \item[(2)] {Assume that  $\mathcal{A}u=u_{xx}$ and that $f(t,x,u)$ satisfies (H2). Then there is $c^*\in\RR$ such that for any $c>c^*$,
  \eqref{main-eq}  has an asymptotically stable transition wave with  mean speed $c$.}

 \item[(3)] {Assume that $\mathcal{A}u=u_{xx}$ and that $f(t,x,u)$ satisfies (H3). Let $\lambda_0=\sup\sigma[\p_{xx}^2+a(\cdot)]$ and $\lambda_1=2a_-$. If $\lambda_0<\lambda_1$, then for any $\lambda\in (\lambda_0,\lambda_1)$, there is an asymptotically stable  transition wave solution $U_\lambda(t,x)$ of \eqref{main-eq} satisfying that
     $$
 U_\lambda(t,x)\le v_\lambda(t,x),
     $$
     where $v_\lambda(t,x)=\phi_\lambda(x) e^{\lambda t}$ and $\phi_\lambda(x)$ is the unique solution of
$$
\phi_\lambda^{''}(x)+a(x)\phi_\lambda(x)=\lambda\phi_\lambda(x)\quad {\rm for}\quad x\in\RR,\quad
\phi_\lambda(0)=1,\quad \lim_{x\to\infty}\phi_\lambda(x)=0.
$$}


\vspace{-0.2in}\item[(4)] Assume that  $\mathcal{A}u=\int_{\RR}\kappa(y-x)u(t,y)dy-u(t,x)$, and that $f(t,x,u)$ is periodic in both $t$ and $x$, then there is $c^*>0$ such that for any $c>c^*$, there is an asymptotically stable periodic traveling wave of \eqref{main-eq} with  speed $c$.
\end{itemize}
\end{theorem}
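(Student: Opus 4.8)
The plan is to derive all four parts as consequences of Theorem \ref{main-thm-general-case}: in each case the \emph{existence} of a non-critical transition wave $u=U(t,x)$ with the stated speed, together with a sharp exponential decay estimate at its leading edge, is already available from the cited works (\cite{She8} under (H1), \cite{She7,NaRo1,NaRo2,NaRo3} under (H2), \cite{Zla,NaRo1,NaRo2,NaRo3} under (H3), and \cite{RaShZh} for the nonlocal periodic case), so that the whole task reduces to producing the auxiliary barriers $\phi,\phi_1$ and verifying the hypotheses \eqref{cond-eq-2}--\eqref{cond-eq2}. Since $u^+\equiv 1$ under any of (H1)--(H3), the wave connects $0$ and $1$, and by Remark \ref{transition-wave-rk}(5) it is uniformly continuous; condition \eqref{cond-eq-2} holds for free in the random dispersal cases by \cite[Proposition 4.2]{HaRos} and, in the nonlocal periodic case (4), directly from the periodic traveling wave structure.

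First I would choose the leading barrier $\phi(t,x)$ to be a positive solution of the equation linearized at $u=0$, i.e.\ of $v_t=\mathcal Av+f(t,x,0)v$, normalized to match the decay of $U$ near $+\infty$; concretely $\phi$ carries the slower of the two admissible exponential rates, say $\mu_1$, furnished by the dispersion relation in the periodic/constant-coefficient cases (H1) and (4), by the principal eigenfunction $\phi_{\lambda_0}$ in the almost periodic case (H2), and by $v_\lambda=\phi_\lambda e^{\lambda t}$ in (H3). The correction $\phi_1(t,x)$ is then taken to be a positive \emph{linear supersolution} with a strictly faster spatial rate $\mu'$ lying in the open window $(\mu_1,\mu_2)$ between the two admissible rates; that $\mathcal L\phi_1:=\partial_t\phi_1-\mathcal A\phi_1-f(t,x,0)\phi_1\ge 0$ for such $\mu'$ is precisely the statement that $\mu'$ sits strictly between the two roots of the dispersion relation. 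With these choices the structural conditions \eqref{cond-eq-1} and \eqref{cond-eq0} are immediate ($\phi_1$ decays faster than $\phi$ at $+\infty$ and grows faster at $-\infty$, both exponentially), while the two-sided sandwich \eqref{cond-eq1} is exactly the quoted leading-edge estimate for $U$.

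The heart of the argument is the comparison property \eqref{cond-eq2}, which I would obtain from the parabolic (resp.\ nonlocal) comparison principle by checking that $\bar u:=d\phi+d_1\phi_1$ is a supersolution and that $\underline u:=d\phi-d_1\phi_1$ is a subsolution of \eqref{main-eq} on the region where it is positive. Writing $\mathcal N w:=w_t-\mathcal Aw-wf(t,x,w)=\mathcal Lw+w\,[f(t,x,0)-f(t,x,w)]$ and using $\mathcal L\phi=0$, the supersolution inequality $\mathcal N\bar u\ge 0$ follows at once, because $\mathcal L\bar u=d_1\mathcal L\phi_1\ge 0$ and $\bar u\,[f(t,x,0)-f(t,x,\bar u)]\ge 0$ by the KPP monotonicity $f(t,x,u)\le f(t,x,0)$. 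The subsolution inequality is the delicate one: here $\mathcal N\underline u=-d_1\mathcal L\phi_1+\underline u\,[f(t,x,0)-f(t,x,\underline u)]$, and the negative linear margin $-d_1\mathcal L\phi_1\sim -d_1\phi_1$ must dominate the superlinear defect $\underline u\,[f(t,x,0)-f(t,x,\underline u)]\le C\underline u^{1+\nu}\le C(d\phi)^{1+\nu}$ supplied by the lower bound in (H1) (respectively by $\int_0^1\frac{u-g(u)}{u^2}\,du<\infty$ in (H3)). Near the leading edge this comparison reads $\phi_1\gtrsim\phi^{1+\nu}$, i.e.\ $\mu'\le(1+\nu)\mu_1$, so I would refine the choice of $\mu'$ to the nonempty window $\mu_1<\mu'<\min\{\mu_2,(1+\nu)\mu_1\}$ and then fix $d_1\gg 1$; the nonemptiness of this window is exactly the \emph{non-criticality} of the wave, which in part (3) is encoded by the spectral gap $\lambda_0<\lambda_1=2a_-$. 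Applying Theorem \ref{main-thm-general-case} then yields asymptotic stability.

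The step I expect to be the main obstacle is making this rate bookkeeping rigorous in the genuinely non-autonomous and space-dependent settings, where the constant-coefficient dispersion relation must be replaced by generalized principal eigenvalues and Lyapunov exponents: one has to verify both that the two admissible rates are well separated (so the window is nonempty) and that the chosen $\phi_1$ is a supersolution \emph{uniformly in} $t$, reconciling its faster spatial decay with the time growth forced by being off the dispersion curve. The nonlocal periodic case (4) carries the additional difficulty that parabolic regularity is unavailable, so the super/sub-solution comparison must be run in the nonlocal framework; here I would exploit the joint time-space periodicity to reduce the rate computation to a principal-eigenvalue problem for the associated periodic linear nonlocal operator, after which the construction of $\phi,\phi_1$ and the verification of \eqref{cond-eq2} proceed as above.
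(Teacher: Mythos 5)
For parts (1), (2) and (4) your proposal follows essentially the same route as the paper: in each case the paper takes $\phi$ to be (a normalization of) a positive solution of the linearization at $0$ with the slow rate $\mu$, takes $\phi_1$ with a strictly faster rate $\mu'$ confined to the window $\mu<\mu'<\min\{\mu^*,(1+\nu)\mu\}$ (with $(1+\nu)\mu$ replaced by $2\mu$ in case (4)), imports the sub/super-solution comparisons \eqref{cond-eq2} from the arguments of \cite{NaRo2}, \cite{NaRo3}, \cite{ShZh1, RaShZh} respectively, and then invokes Theorem \ref{main-thm-general-case}; your explicit computation of $\mathcal{N}\bar u\ge 0$ and $\mathcal{N}\underline u\le 0$ is exactly the mechanism behind those cited comparisons, so this portion of your plan is sound and matches the paper.

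The genuine gap is in part (3). Your scheme presupposes ``two admissible exponential rates'' $\mu_1<\mu_2$ furnished by a dispersion relation, with $\phi_1$ a \emph{linear} super-solution at a rate in $(\mu_1,\min\{\mu_2,(1+\nu)\mu_1\})$, and you assert that the nonemptiness of this window is ``encoded by the spectral gap $\lambda_0<\lambda_1=2a_-$.'' Under (H3), however, $a(x)$ is a general bounded uniformly continuous function with no periodicity or almost periodicity, so there is no dispersion relation, no principal Lyapunov exponent theory, and no well-defined second rate: for each $\lambda>\lambda_0$ the decaying solution $\phi_\lambda$ of $\phi''+a\phi=\lambda\phi$ is unique up to scalar multiple (it is the principal solution at $+\infty$), and any attempt to use $\phi_{\lambda'}e^{\lambda' t}$ with $\lambda'>\lambda$ as the faster barrier runs into exactly the unresolved issue you flag yourself -- one must show that $\phi_{\lambda'}(x+X(t))e^{\lambda' t}/\big(\phi_\lambda(x+X(t))e^{\lambda t}\big)\to 0$ exponentially, \emph{uniformly in} $t$, and in the non-almost-periodic setting there is no control of $\phi_{\lambda'}/\phi_\lambda$ along the moving frame to make this work. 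The condition $\lambda<2a_-$ does not supply such a rate window; what it supplies (following \cite{Zla}) is the applicability of the front-composition trick. Accordingly, the paper's proof of (3) is structurally different: it takes $\phi=\phi_\lambda e^{\lambda t}$, $\phi_1=w^+=\phi-h_{g,\sqrt{\alpha}}(\phi)$ as the upper correction, and, crucially, uses the \emph{nonlinear} family of sub-solutions $\psi_M=h_{g_M,\sqrt{\alpha}}(\phi_\lambda e^{\lambda t})$, $g_M(u)=g(u)-Mu^2$, as lower barriers. Since these lower barriers are not of the form $d\phi-d_1\phi_1$ required in \eqref{cond-eq2} (and since the first limit in \eqref{cond-eq0} fails for $\phi_1=\phi-h_{g,\sqrt{\alpha}}(\phi)$, because $\phi/\phi_1\to 1$ as $x\to-\infty$), Theorem \ref{main-thm-general-case} cannot be applied as a black box; the paper instead derives the two-sided estimate \eqref{zl-eq7} directly from the barriers $\psi_M$ and $(1+\epsilon)\phi+d_1\phi_1$ and then \emph{reruns} the part-metric contraction argument from the proof of Theorem \ref{main-thm-general-case}. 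Your proposal is missing this modification, and without it part (3) does not go through.
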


{\begin{remark}
\label{special-case-rk1}
\begin{itemize}

\item[(1)]   The existence of transition waves in Theorem \ref{main-thm-special-case}(1) is proved in \cite{NaRo2}.  In fact,  the authors of \cite{NaRo2}  studied a more general case. With the same techniques for the proof of Theorem \ref{main-thm-general-case} the asymptotic
stability of transition waves in this general case can also be established. The following  special cases should be mentioned.
The existence of transition waves in the case that $f(t,x,u)\equiv f(t,u)$ is proved in \cite{NaRo1}.
 When $f(t,x,u)$ is  almost periodic in $t$ or recurrent and unique ergodic in $t$, the existence of transition waves
  is proved in \cite{She8}.  In \cite{She7}, stability, uniqueness, and almost periodicity of transition waves  are also proved when $f(t,x,u)\equiv f(t,u)$ is almost periodic in $t$.

\item[(2)]  The existence of transition waves in Theorem \ref{main-thm-special-case}(2) is proved in \cite{NaRo3}. Again,
 the authors of \cite{NaRo3} actually studied a more general case, and with the same techniques for the proof of Theorem \ref{main-thm-general-case} the asymptotic
stability of transition waves in this general case can also be established.

\item[(3)] The existence of transition waves in Theorem \ref{main-thm-special-case}(3) is proved in \cite{Zla}.
By the similar arguments as those in Theorem \ref{main-thm-special-case}(3), it can be proved that the transition waves established in
\cite{LiZl} for the case that $\mathcal{A}u=\int_{\RR}\kappa(y-x)u(t,y)dy-u(t,x)$ and $f(t,x,u)\equiv f(x,u)$ are asymptotically stable,
and that the transition waves established in \cite{ShSh1} for the case that $\mathcal{A}u=\int_{\RR}\kappa(y-x)u(t,y)dy-u(t,x)$ and $f(t,x,u)\equiv f(t,u)$ are asymptotically stable (the stability of transition waves in this case has been proved in \cite{ShSh1} by the ``squeezed'' techniques).

\item[(4)] The existence of transition waves in Theorem \ref{main-thm-special-case}(4) is proved in \cite{RaShZh}.  In the case
that $f(t,x,u)\equiv f(x,u)$ is periodic in $x$, the stability of transition waves is proved in \cite{ShZh1} by the ``squeezed'' techniques.

\item[(5)]  Transition waves with least mean speed $c^*$ are related to the so called {\rm critical traveling waves} in literature (see \cite{Nad1}, \cite{She4}).
Both the existence and  stability of critical transition waves are much more difficult to study. The reader is referred to \cite{HaRo} and references therein for
the study of the stability of critical traveling waves in the space and/or time periodic or time independent case with random dispersal. The reader is referred to \cite{Nad, NaRo1, NaRo2, NaRo3} for some results on the existence of critical transition waves of Fisher-KPP equations with random dispersal, and to \cite{CoDaMa} and reference therein for the  existence of
 critical transition waves in time independent and space periodic or independent  Fisher-KPP equations with nonlocal dispersal.
The stability of critical transition waves in the general time and space dependent Fisher-KPP equations with random dispersal and
in time and/or space periodic Fisher-KPP equations with nonlocal dispersal remains open.
\end{itemize}
\end{remark}
}

\section{Positive entire solutions}

In this section, we study the existence, uniqueness, and stability of positive entire  solutions and prove Theorem \ref{positive-solu-thm}.

For a given continuous and bounded function $u:[t_1,t_2)\times\RR\to\RR$, it is called a {\it super-solution} ({\it sub-solution}) of \eqref{main-eq} on $[t_1,t_2)$ if
\begin{equation}
\label{sub-super-solution-eq}
u_t(t,x)\geq (\leq) (\mathcal{A} u)(t,x)+u(t,x)f(t, x,u(t,x))\quad\forall  (t,x)\in (t_1,t_2)\times \RR.
\end{equation}

\begin{proposition}[Comparison principle]
\label{comparison-prop}
\begin{itemize}
\item[(1)]  Suppose that $u^1(t,x)$ and $u^2(t,x)$ are sub- and super-solutions of \eqref{main-eq}
on $[t_1,t_2)$ with  $u^1(t_1,x)\leq u^2(t_2,x)$ for $x\in \RR$. Then
$u^1(t,x)\leq u^2(t,x)$ for  $t\in (t_1,t_2)$ and $x\in\RR$. Moreover, if $u^1(t_1,x)\not\equiv u^2(t_2,x)$ for $x\in\RR$, then
$u^1(t,x)<u^2(t,x)$ for $t\in(t_1,t_2)$ and $x\in\RR$.

\item[(2)] If $u_{01},u_{02}\in C_{\rm unif}^b(\RR)$ and $u_{01}\leq u_{02}$, then $u(t,\cdot;t_0,u_{01})\leq u(t,\cdot;t_0,u_{02})$ for $t>t_0$ at which both
$u(t,\cdot;t_0,u_{01})$ and $u(t,\cdot;t_0,u_{02})$  exist. Moreover, if  $u_{01}\not =u_{02}$, then $u(t,x;t_0,u_{01})< u(t,x;t_0,u_{02})$ for all $x\in\RR$
 and $t>t_0$ at which both
$u(t,\cdot;t_0,u_{01})$ and $u(t,\cdot;t_0,u_{02})$  exist.

\item[(3)] If $u_{01},u_{02}\in C_{\rm unif}^b(\RR)$ and $u_{01}\ll u_{02}$ (i.e. $\inf_{x\in\RR}\big(u_{02}(x)-u_{01}(x)\big)>0$), then  $u(t,\cdot;t_0,u_{01})\ll u(t,\cdot;t_0,u_{02})$ for $t>t_0$ at which both
$u(t,\cdot;t_0,u_{01})$ and $u(t,\cdot;t_0,u_{02})$  exist.
\end{itemize}
\end{proposition}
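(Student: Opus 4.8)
The plan is to reduce all three parts to a single \emph{weak comparison} inequality for the difference $w=u^2-u^1$ and then bootstrap to the strict statements, running the random and nonlocal cases in parallel wherever possible. First I would linearize the reaction term: writing $g(t,x,u)=uf(t,x,u)$, hypothesis (H0) ensures $g_u=f+u f_u$ is bounded on the bounded range of the (bounded) sub- and super-solutions, so the mean value theorem gives $g(t,x,u^2)-g(t,x,u^1)=c(t,x)\,w$ with $|c|\le C_0$ for some $C_0>0$. Subtracting the two differential inequalities \eqref{sub-super-solution-eq} then yields
\begin{equation*}
w_t \ge \mathcal{A}w + c(t,x)\,w \quad\text{on } (t_1,t_2)\times\RR,\qquad w(t_1,\cdot)\ge 0 .
\end{equation*}
The inequality part of (1) amounts to proving $w\ge 0$, and part (2) is the special case $u^1=u(\cdot\,;t_0,u_{01})$, $u^2=u(\cdot\,;t_0,u_{02})$, each of which is an exact solution and hence simultaneously a sub- and a super-solution.

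Second, to prove $w\ge 0$ I would remove the zeroth-order term by setting $v=e^{-\mu(t-t_1)}w$ with $\mu>C_0$, so that $v$ satisfies $v_t\ge \mathcal{A}v+(c-\mu)v$ with $c-\mu<0$ and $v(t_1,\cdot)\ge 0$. When $\mathcal{A}u=u_{xx}$ this is a linear parabolic inequality with nonpositive zeroth-order coefficient; since $v$ is bounded but the domain is unbounded, I would apply the minimum principle to $v+\e(1+x^2+Kt)$ on $[t_1,t]\times[-R,R]$ for $K$ large relative to $C_0$ (so that the barrier is a strict supersolution of the gauged operator), and let $R\to\infty$ and then $\e\to 0$ to conclude $v\ge 0$. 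When $\mathcal{A}u=\int_{\RR}\kappa(y-x)u(t,y)\,dy-u(t,x)$ there is no regularizing effect, so instead I would track $\phi(t)=\inf_x v(t,x)$: since $\int_{\RR}\kappa=1$ and $v\ge\phi(t)$ one has $(\kappa*v)(t,x)\ge\phi(t)$, so at an approximate minimizer $v_t\ge(c-\mu)\phi(t)$, and because $c-\mu<0$ this forces $\phi$ to increase whenever it is negative; a standard estimate on the lower Dini derivative of $\phi$ (via $\e$-approximate minimizers and Gronwall) gives $\phi(t)\ge 0$, i.e.\ $v\ge 0$.

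Third, for the strict statement in (1) and (2) assume $w(t_1,\cdot)\not\equiv 0$. In the random case $w>0$ on $(t_1,t_2)\times\RR$ is the classical strong parabolic maximum principle applied to $v$. In the nonlocal case I would use the auxiliary function $h(t,x):=w(t,x)\,e^{\int_{t_1}^t(1-c(\tau,x))\,d\tau}$, which satisfies $h_t\ge(\kappa*w)\,e^{\int_{t_1}^t(1-c)\,d\tau}\ge 0$; integrating shows $w(t,x)>0$ wherever $w(t_1,x)>0$, and since $\kappa>0$ on $\{|z|<r_0\}$ the nonnegative forcing $\kappa*w$ spreads strict positivity to each $r_0$-neighborhood, so finitely many iterations propagate $w>0$ to all of $\RR$ for every $t>t_1$. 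Finally, part (3) follows from the weak comparison: when $u_{01}\ll u_{02}$ we have $\delta_0:=\inf_x w(t_1,x)>0$, and the spatially constant function $\underline{w}(t)=\delta_0 e^{-\mu(t-t_1)}$ satisfies $\mathcal{A}\underline w=0$, hence is a subsolution of the linearized equation for $\mu\ge C_0$ with $\underline w(t_1)\le w(t_1,\cdot)$; weak comparison then gives $w(t,\cdot)\ge \delta_0 e^{-\mu(t-t_1)}>0$ uniformly in $x$.

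The step I expect to be the main obstacle is the nonlocal dispersal throughout: the absence of smoothing and of compactness means the extremum defining $\phi(t)$ need not be attained and the strong maximum principle is unavailable, so both the sign argument for $\phi$ and the propagation of strict positivity must be carried out by hand, relying on $\int_{\RR}\kappa=1$ and the positivity of $\kappa$ on $\{|z|<r_0\}$.
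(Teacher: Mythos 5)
Your proposal is correct, but it takes a genuinely different route from the paper: the paper does not prove the proposition at all, it simply cites the classical parabolic comparison principle (\cite{Fri}) when $\mathcal{A}u=u_{xx}$ and the nonlocal comparison results of \cite[Propositions 2.1, 2.2]{ShZh2} when $\mathcal{A}u(t,x)=\int_{\RR}\kappa(y-x)u(t,y)\,dy-u(t,x)$. What you do instead is reconstruct, in a self-contained way, essentially the arguments that lie behind those citations: the linearization $g(t,x,u^2)-g(t,x,u^1)=c(t,x)w$ with $c$ bounded via (H0), the exponential gauge $v=e^{-\mu(t-t_1)}w$, the Phragm\'en--Lindel\"of barrier $v+\e(1+x^2+Kt)$ on expanding cylinders for the random dispersal case, the infimum-tracking/Gronwall argument for the nonlocal case, the persistence-plus-propagation argument (via $h=w\,e^{\int_{t_1}^t(1-c)\,d\tau}$ and the positivity of $\kappa$ on $\{|z|<r_0\}$) for the strict inequalities, and the constant subsolution $\delta_0 e^{-\mu(t-t_1)}$ for part (3). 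Each approach buys something: the paper's citation is economical and standard, while your argument makes the result self-contained and shows exactly where each hypothesis enters --- boundedness of $f$ and $f_u$ on bounded $u$-ranges, $\int_{\RR}\kappa=1$, and $\kappa>0$ near zero. Two details you should tighten when writing this up. First, in the nonlocal weak comparison, the lower bound on the Dini derivative of $\phi(t)=\inf_x v(t,x)$ cannot be read off directly from the equation at a time-$t$ approximate minimizer; you need the uniform lower bound $v_t\ge -C$ (which follows from boundedness of $v$, of $\kappa*v$, and of $c$) to relate approximate minimizers at time $t+h$ to values at time $t$, and a sign case analysis to drop the $(c-\mu)v$ term when $\phi<0$; this is exactly the ``by hand'' work you anticipate, and it does go through. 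Second, the propagation of strict positivity exhausts $\RR$ only after countably many $r_0$-steps (finitely many for each fixed $x$), and each step should restart from an initial time arbitrarily close to $t_1$ so that positivity is obtained on all of $(t_1,t_2)$ rather than on a shrinking time interval.
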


\begin{proof}
When $\mathcal{A}u=u_{xx}$, the proposition follows from comparison principle for parabolic equations (see \cite{Fri}).
When $\mathcal{A}u(t,x)=\int_{\RR}\kappa(y-x)u(t,y)dy-u(t,x)$, it follows from comparison principle for
nonlocal dispersal evolution equations (see, for example, \cite[Propositions 2.1,2.2]{ShZh2}).
\end{proof}

Note that, by (H0), for $M\gg 1$, $u(t,x)\equiv M$ is a super-solution of \eqref{main-eq} on $\RR$.
The following proposition follows directly from Proposition \ref{comparison-prop} and (H0).

\begin{proposition}
\label{global-existence-prop} For any $t_0\in\RR$ and $u_0\in C_{\rm
unif}^b(\RR)$ with $u_0(\cdot)\ge 0$, $u(t,x;t_0,u_0)$ exists for
all $t\ge t_0$ and $\sup_{t\ge
0}\|u(t+t_0,\cdot;t_0,u_0)\|_\infty<\infty$.
\end{proposition}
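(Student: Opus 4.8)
The plan is to combine the local existence statement from semigroup theory with the comparison principle (Proposition \ref{comparison-prop}) and the explicit constant super-solution announced in the paragraph preceding the statement, and then to exclude finite-time blow-up by the standard continuation dichotomy for the abstract evolution equation.

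First I would verify the super- and sub-solution candidates. For any constant $M\ge P_0$ the function $u(t,x)\equiv M$ is a super-solution of \eqref{main-eq}: here $u_t\equiv 0$ and $\mathcal{A}M=0$ in both dispersal cases (in the random case $M_{xx}=0$, and in the nonlocal case $\int_{\RR}\kappa(y-x)M\,dy-M=M\big(\int_{\RR}\kappa(y)\,dy-1\big)=0$), so the super-solution inequality \eqref{sub-super-solution-eq} reduces to $0\ge M f(t,x,M)$, which holds because (H0) gives $f(t,x,M)\le-\beta_0<0$ for $M\ge P_0$. Similarly $u\equiv 0$ is a sub-solution (indeed a solution). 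This is the content of the remark that the proposition ``follows directly from Proposition \ref{comparison-prop} and (H0)''.

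Next, given $t_0\in\RR$ and $u_0\ge 0$ in $C_{\rm unif}^b(\RR)$, I would put $M:=\max\{P_0,\|u_0\|_\infty\}$, so that $0\le u_0(x)\le M$ for all $x\in\RR$, and let $[t_0,T_{\max})$ be the maximal interval of existence of the local solution $u(t,x;t_0,u_0)$ furnished by the general semigroup theory. On any subinterval where it exists the solution is both a sub- and a super-solution, so Proposition \ref{comparison-prop}(1) applied to the pair $(u,\,M)$ yields $u(t,x;t_0,u_0)\le M$, and applied to the pair $(0,\,u)$ yields $u(t,x;t_0,u_0)\ge 0$. Hence $0\le u(t,x;t_0,u_0)\le M$ for all $t\in[t_0,T_{\max})$ and $x\in\RR$.

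Finally I would invoke the blow-up alternative: if $T_{\max}<\infty$ then $\|u(t,\cdot;t_0,u_0)\|_\infty\to\infty$ as $t\to T_{\max}^-$, which contradicts the uniform bound just obtained; therefore $T_{\max}=\infty$, the solution exists for all $t\ge t_0$, and $\sup_{t\ge 0}\|u(t+t_0,\cdot;t_0,u_0)\|_\infty\le M<\infty$. I expect the only delicate point to be this continuation step, since it relies on the fact that the local theory produces solutions in $C_{\rm unif}^b(\RR)$ obeying the standard dichotomy (global existence or $L^\infty$-blow-up); everything else is an immediate consequence of the sign condition in (H0) and the comparison principle.
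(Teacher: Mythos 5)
Your proof is correct and is exactly the argument the paper intends: the paper merely notes that constants $M\gg 1$ are super-solutions by (H0) and that the proposition ``follows directly from Proposition \ref{comparison-prop} and (H0)''. Your write-up simply supplies the details the paper omits --- the verification that $\mathcal{A}M=0$ in both dispersal cases, the comparison with $0$ and $M:=\max\{P_0,\|u_0\|_\infty\}$ on the maximal existence interval, and the standard continuation dichotomy from the semigroup theory of \cite{Hen}.
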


For given $u,v\in C_{\rm unif}^b(\RR)$ with $u,v\ge 0$, if there is $\alpha_0\ge 1$ such that
$$
\frac{1}{\alpha_0} v(x)\le u(x)\le \alpha_0
v(x)\quad\forall\,\,x\in\RR,
$$
then we can define the so called {\it part metric} $\rho(u,v)$ between $u$ and $v$ by
$$
\rho(u,v)=\inf\{\ln \alpha\,|\, \alpha\ge 1,\,\, \frac{1}{\alpha}v(\cdot)\le u(\cdot)\le \alpha v(\cdot)\}.
$$
Note that for any given $u,v\in C_{\rm unif}^b(\RR)$ with $u,v\ge 0$, the part metric between $u$ and $v$ may not be defined.

\begin{proposition}
\label{part-metric-prop}
\begin{itemize}
\item[(1)]
For given $u_0,v_0\in C_{\rm unif}^b(\RR)$ with $u_0,v_0\ge 0$, if $\rho(u_0,v_0)$ is defined, then for any $t_0\in\RR$,
$\rho(u(t+t_0,\cdot;t_0,u_0),u(t+t_0,\cdot;t_0,v_0))$ is also defined for all $t>0$. Moreover,
$\rho(u(t+t_0,\cdot;t_0,u_0),u(t+t_0,\cdot;t_0,v_0))$  is non-increasing in $t$.

\item[(2)] For any $\epsilon>0$, $\sigma>0$, $M>0$,  and $\tau>0$  with $\epsilon<M$ and
$\sigma\le \ln \frac{M}{\epsilon}$, there is $\delta>0$   such that
for any $u_0,v_0\in C_{\rm unif}^b(\RR)$ with $\epsilon\le u_0(x)\le M$, $\epsilon\le v_0(x)\le M$ for $x\in\RR$ and
$\rho(u_0,v_0)\ge\sigma$, there holds
$$
\rho(u(\tau+t_0,\cdot;t_0,u_0),u(\tau+t_0,\cdot;t_0,v_0))\le \rho(u_0,v_0)- \delta\quad \text{for all}\quad t_0\in\RR.
$$
\end{itemize}\end{proposition}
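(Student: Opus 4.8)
The plan is to prove both parts by exploiting the single structural feature of \eqref{main-eq} that (H0) provides: the strict monotonicity $f_u\le -\beta_0<0$ in $u$. The whole argument rests on the fact that, because both $\mathcal{A}u=u_{xx}$ and the nonlocal operator are \emph{linear} and act only in the $x$-variable, scalar multiples of a solution behave predictably. Concretely, if $v(t,x)=u(t+t_0,x;t_0,v_0)$ is a nonnegative solution and $\alpha\ge 1$ is constant, then
\[
(\alpha v)_t-\mathcal{A}(\alpha v)-(\alpha v)f(t,x,\alpha v)=\alpha v\big[f(t,x,v)-f(t,x,\alpha v)\big]\ge 0,
\]
since $\alpha v\ge v$ and $f(t,x,\cdot)$ is nonincreasing; hence $\alpha v$ is a super-solution and, symmetrically, $\alpha^{-1}v$ a sub-solution in the sense of \eqref{sub-super-solution-eq}. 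For Part (1), writing $\rho(u_0,v_0)=\ln\alpha_0$ we have $\alpha_0^{-1}v_0\le u_0\le\alpha_0 v_0$, so Proposition \ref{comparison-prop} yields $\alpha_0^{-1}v(t)\le u(t)\le\alpha_0 v(t)$ for $t>0$, where $u(t)=u(t+t_0,\cdot;t_0,u_0)$; this shows $\rho(u(t+t_0),u(t+t_0;t_0,v_0))$ is defined and bounded by $\rho(u_0,v_0)$. Monotonicity in $t$ then follows by reapplying this estimate with the initial time moved to $s+t_0$ and invoking uniqueness of solutions (the flow property).

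For Part (2) I would first record uniform two-sided bounds on the orbits. By (H0) the constant $M'=\max\{M,P_0\}$ is a super-solution, so $v(t,x)\le M'$ for all $t\ge 0$; since $f$ is bounded below on $[0,M']$, say $f\ge -C$ there by \eqref{condition-eq1}, the spatially constant function $\epsilon e^{-Ct}$ is a sub-solution, whence $v(t,x)\ge \epsilon e^{-C\tau}=:m_\tau>0$ for $t\in[0,\tau]$. The point is that $M'$, $C$ and $m_\tau$ depend only on $\epsilon,M,\tau$ and $f$ (through $\beta_0,P_0$), and \emph{not} on $t_0$ or on the particular admissible data; this is exactly what makes the eventual $\delta$ uniform in $t_0$.

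The main step is to replace the constant multiplier $\alpha_0$ by a time-dependent one that strictly improves the bound. Setting $k=\beta_0 m_\tau>0$, let $h$ solve the logistic ODE $h'=-k\,h(h-1)$ with $h(0)=\alpha_0=e^{\rho(u_0,v_0)}$, so $h(t)>1$ decreases. Using the mean value theorem with $f_u\le-\beta_0$ to get $f(t,x,v)-f(t,x,hv)\ge\beta_0(h-1)v$ together with $v\ge m_\tau$, the same computation as above shows $h(t)v(t,x)$ is a super-solution; since $h(0)v_0=\alpha_0v_0\ge u_0$, comparison gives $u(\tau)\le h(\tau)v(\tau)$. A symmetric construction with $g'=k\,g(1-g)$, $g(0)=\alpha_0^{-1}$, gives $u(\tau)\ge g(\tau)v(\tau)$. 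Solving the ODEs explicitly (one finds $h(\tau)^{-1}=1-(1-\alpha_0^{-1})e^{-k\tau}$, and likewise for $g$) and writing $\theta=e^{-\rho(u_0,v_0)}=\alpha_0^{-1}\le e^{-\sigma}$, I obtain
\[
\rho(u_0,v_0)-\ln h(\tau)=\ln\!\Big(1+\frac{(1-\theta)(1-e^{-k\tau})}{\theta}\Big)\ge \ln\big(1+(e^{\sigma}-1)(1-e^{-k\tau})\big)=:\delta_1>0,
\]
and, analogously, $\ln(\alpha_0 g(\tau))=-\ln\big[1-(1-\theta)(1-e^{-k\tau})\big]\ge-\ln\big[1-(1-e^{-\sigma})(1-e^{-k\tau})\big]=:\delta_2>0$. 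Taking $\delta=\min\{\delta_1,\delta_2\}$ and $\tilde\alpha=\alpha_0 e^{-\delta}$ then gives $\tilde\alpha^{-1}v(\tau)\le u(\tau)\le\tilde\alpha v(\tau)$, i.e.\ $\rho(u(\tau+t_0),u(\tau+t_0;t_0,v_0))\le\rho(u_0,v_0)-\delta$.

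The main obstacle is precisely Part (2): producing a contraction amount $\delta$ that is uniform over all data with $\epsilon\le u_0,v_0\le M$ and $\rho\ge\sigma$, and over all shifts $t_0$. The time-dependent logistic multiplier is the device that converts the pointwise strict monotonicity $f_u\le-\beta_0$ into a quantitative closing of the part-metric gap, while the uniform lower bound $m_\tau$ is what prevents the rate constant $k=\beta_0 m_\tau$, and hence $\delta$, from degenerating; the hypothesis $\sigma\le\ln(M/\epsilon)$ plays only the role of keeping the admissible class nonempty. Uniformity in $t_0$ is automatic because the constants $M',C,m_\tau$ coming from (H0) are themselves uniform in $t_0$.
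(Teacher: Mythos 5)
Your proof is correct, and it is genuinely self-contained, which the paper's is not: the paper disposes of this proposition in one line by citing \cite[Proposition 3.4]{KoSh}. The argument behind that citation (the same scheme reappears in this paper's own proof of Theorem \ref{main-thm-general-case}) obtains the uniform decrease differently: with $\alpha_0=e^{\rho(u_0,v_0)}$ \emph{fixed}, one forms the difference $\bar u$ between the constant-multiple super-solution $\alpha_0\,u(t+t_0,\cdot;t_0,v_0)$ and the solution it dominates, observes that $\bar u$ solves a linear inhomogeneous equation $\bar u_t=\mathcal{A}\bar u+p\bar u+b$ whose forcing $b=\alpha_0 v\big[f(t,x,v)-f(t,x,\alpha_0 v)\big]\ge \beta_0(\alpha_0-1)\alpha_0 m_\tau^2>0$ is uniformly positive, and then extracts a uniform lower bound on $\bar u(\tau)$ from positivity of the linear evolution (Duhamel with the heat kernel when $\mathcal{A}u=u_{xx}$, a pointwise ODE estimate in the nonlocal case), finally converting the additive gain into a multiplicative one via the upper bound on the solutions. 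Your logistic multiplier $h'=-kh(h-1)$, $k=\beta_0 m_\tau$, replaces that linear-theory step entirely: the multiplication constant itself decays, $h(t)v$ stays a super-solution because $h'v+hv\big[f(t,x,v)-f(t,x,hv)\big]\ge v\big[h'+kh(h-1)\big]=0$ for $v\ge m_\tau$, and the decrease is read off the explicit formula $h(\tau)^{-1}=1-(1-\theta)e^{-k\tau}$, $\theta=\alpha_0^{-1}$. What this buys: a single argument valid verbatim for both dispersal operators (only Proposition \ref{comparison-prop} is invoked, with no kernel estimates and no case split), and an explicit $\delta$, manifestly uniform in $t_0$ and in the data because $k$ and the constant $C$ come from (H0) and \eqref{condition-eq1} alone. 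I checked the key identities, namely $\rho(u_0,v_0)-\ln h(\tau)=\ln\big(1+\theta^{-1}(1-\theta)(1-e^{-k\tau})\big)$ and $\ln\big(e^{\rho(u_0,v_0)}g(\tau)\big)=-\ln\big(1-(1-\theta)(1-e^{-k\tau})\big)$, their monotonicity in $\theta\le e^{-\sigma}$, and the fact that $\delta_1,\delta_2<\sigma\le\rho(u_0,v_0)$, so that $e^{\rho(u_0,v_0)-\delta}\ge 1$ is an admissible constant in the definition of $\rho$: everything holds. Two minor points worth making explicit: in Part (1) the infimum defining $\rho(u_0,v_0)$ is attained (let $\alpha\downarrow\alpha_0$ in $\alpha^{-1}v_0\le u_0\le\alpha v_0$ and use continuity), which is what licenses starting from $\alpha_0^{-1}v_0\le u_0\le \alpha_0 v_0$; and the comparison functions you use ($\alpha_0 v$, $h(t)v$, $g(t)v$, $\epsilon e^{-Ct}$) are bounded and continuous, as required by \eqref{sub-super-solution-eq} and Proposition \ref{comparison-prop}, thanks to Proposition \ref{global-existence-prop}.
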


\begin{proof} It follows from the similar arguments as  those in \cite[Proposition 3.4]{KoSh}.
\end{proof}

\begin{proof}[Proof of Theorem \ref{positive-solu-thm}]
We divide the proof into two steps.

\smallskip

\noindent {\bf Step 1.} In this step,  we prove the existence, uniqueness, and stability of  bounded strictly positive entire solutions $u^+(t,x)$ with
$u^+(t,x)$ being uniformly continuous in $(t,x)\in\RR\times \RR$.

Note that the existence, uniqueness, and stability of bounded strictly positive entire solutions  follows from the similar arguments as those in \cite[Theorem 1.1(1)]{CaSh}.
We  outline the  proof of existence in the following  for the use in the proof of uniform continuity.

Clearly,  $u(t,x)\equiv M$ is a super-solution of \eqref{main-eq} for any $M\gg 1$. For given $\delta>0$, let
$v_\delta\equiv \delta$. By the similar  arguments of  \cite[Theorem 1.1(1)]{CaSh}, there are $\delta_0>0$ and $T>0$ such that
for any $0<\delta\le\delta_0$,
\begin{equation}
\label{aux-pary-eq2}
u(t_0+T,\cdot;t_0,v_\delta)\ge v_\delta\quad\forall\,\, t_0\in\RR.
\end{equation}
Fix $M\gg 1$ and $0<\delta\ll 1$. Let $u_M\equiv M$ and $u_\delta\equiv \delta$. For given $m\in\ZZ$ and  $n\in\ZZ^+$, let
$u_{m-n,m}(\cdot)=u(mT,\cdot;(m-n)T,u_\delta)$ and $u^{m-n,m}(\cdot)=u(mT,\cdot;(m-n)T,u_M)$. Then for any $m\in\ZZ$ and
$n\ge 0$,
$$
\delta\le  u_{m-1,m}(\cdot)\le u_{m-2,m}(\cdot)\le \cdots,\,\, M\ge u^{m-1,m}(\cdot)\ge u^{m-2,m}(\cdot)\ge \cdots.
$$
Moreover, by Proposition \ref{part-metric-prop}, it is not difficult to prove that
\begin{equation}
 \label{continuity-eq0}
 \rho(u^{m-n,m},u_{m-n,m})\to 0\quad {\rm as}\,\, n\to\infty\quad \text{uniformly in}\,\, m\in\ZZ.
 \end{equation}
 Hence there are $u^{*,m}\in C_{\rm unif}^b(\RR)$ ($m\in\ZZ$) such that
$$
u^{*,m}(x)=\lim_{n\to\infty} u_{m-n,m}(x)=\lim_{n\to\infty} u^{m-n,m}(x)\quad \text{uniformly in}\,\, x\in\RR.
$$
It is then not difficult to see that
 $u(t,\cdot;0,u^{*,0})$ has backward extension for all $t<0$ and hence
 $u^+(t,x):=u(t,\cdot;0,u^{*,0})$ is an entire solution.  Moreover,
 \begin{equation}
 \label{continuity-eq3}
 \delta\le u^+(kT,x)\le M\quad \forall\,\,  x\in\RR,\,\, k\in\ZZ
 \end{equation}
 and
 \begin{equation}
 \label{continuity-eq4}
 u^+(kT,x)=\lim_{n\to\infty}u(kT,x;(k-n)T,u_M)\quad \text{uniformly in}\,\, x\in\RR,\,\, k\in\ZZ.
 \end{equation}
By \eqref{continuity-eq3}, $u^+(t,x)$ is a bounded strictly positive entire solution of \eqref{main-eq}.

 We now prove the uniform continuity of $u^+(t,x)$ in $(t,x)\in\RR\times\RR$.
 In the case that $\mathcal{A}u=u_{xx}$, by the regularity and
a priori estimates for parabolic equations, we have that $u^+(t,x)$ is uniformly continuous in $(t,x)\in\RR\times\RR$.
We then only need to prove that $u^+(t,x)$ is uniformly continuous in $(t,x)\in\RR\times\RR$ in the case that
 $\mathcal{A}$ is a nonlocal dispersal operator. In this case, by the boundedness of $u^+(t,x)$, $u_t^+(t,x)$ is uniformly bounded.
 This implies that $u^+(t,x)$ is uniformly continuous in $t$ uniformly with respect to $x\in\RR$. We claim that
 $u^+(t,x)$ is also uniformly continuous in $x$ uniformly with respect to $t\in\RR$. Indeed,
 By \eqref{continuity-eq4}, for any $\epsilon>0$, there is $K\in \NN$ such that
 \begin{equation}
 \label{continuity-eq5}
 |u^+(t+kT,x)-u(t+kT,x;(k-K)T,u_M)|<\epsilon\quad \forall\, \, x\in\RR,\,\, 0\le t\le T,\,\, k\in\ZZ.
 \end{equation}
 It then suffices to prove that $u(t+kT,x;(k-K)T,u_M)$ is uniformly continuous in $x$ uniformly with respect to $t\in [0,T]$ and
 $k\in\ZZ$. For any given $h>0$ and $k\in\ZZ$, let
 $$
 v(t,x;h,k)=u(t,x+h;(k-K)T,u_M)-u(t,x;(k-K)T,u_M).
 $$
 Then $v(t,x;h,k)$ satisfies
 $$
 \begin{cases}
 v_t=\mathcal{A}v(t,x)+p(t,x)v(t,x;h,k)+q(t,x),\quad x\in\RR,\,\, t>(k-K)T\cr
 v((k-K)T,x;h,k)=0,\quad x\in\RR,
 \end{cases}
 $$
 where
 \begin{align*}
 p(t,x)=&f(t,x+h,u(t,x+h;(k-K)T,u_M))+u(t,x;(k-K)T,u_M)\cdot\\
 &\quad \int_0^1 f_u(t,x,su(t,x+h;(k-K)T,u_M)+(1-s)u(t,x;(k-K)T,u_M))ds
 \end{align*}
 and
 $$
 q(t,x)=u(t,x;(k-K)T,u_M)\big[ f(t,x+h,u(t,x+h;(k-K)T,u_M))-f(t,x,u(t,x+h;(k-K)T,u_M))\big].
 $$
 Then
 $$
 v(t,\cdot;h,k)=\int_{(k-K)T}^t e^{\mathcal{A}(t-\tau) }p(\tau,\cdot) v(\tau,\cdot;h,k)d\tau+\int_{(k-K)T}^t e^{\mathcal{A}(t-\tau)}q(\tau,\cdot)d\tau.
 $$
 This together with the assumption (H0) implies that there is $C>0$ such that
 $$
 |v(t,x;h,k)|\le C h\quad \forall \,\, 0\le t\le T,\,\, x\in\RR,\,\, k\in\ZZ.
 $$
 Then by \eqref{continuity-eq5} and Gronwall's inequality,  $u^+(t,x)$ is uniformly continuous in $x$ uniformly with respect to $x\in\RR$ and
 then $u^+(t,x)$ is uniformly continuous in $(t,x)\in\RR\times\RR$.

By the similar arguments as those in \cite[Theorem 1.1(1)]{CaSh}, we have that  bounded strictly positive entire solutions
 of \eqref{main-eq} are stable and unique. This completes the proof of Step 1.

\smallskip

\noindent {\bf Step 2.} In this step, we show that
 $u^+(t,x)$ is almost periodic in $t$ (resp., in $x$) if $f(t,x,u)$ is almost periodic in $t$ (resp., in $x$).

 Assume that $f(t,x,u)$ is almost periodic in $t$.
For any given sequences $\{\alpha^{'}_n\}$, $\{\beta^{'}_n\}\subset \RR$, there are subsequences
$\{\alpha_n\}\subset\{\alpha_n^{'}\}$, $\{\beta_n\}\subset\{\beta_n^{'}\}$ such that
$$
\lim_{m\to\infty}\lim_{n\to\infty}f(t+\alpha_n+\beta_m,x,u)=\lim_{n\to\infty} f(t+\alpha_n+\beta_n,x,u).
$$
By the uniform continuity of $u^+(t,x)$, without loss of generality, we may assume that
$\lim_{n\to\infty}u^+(t+\alpha_n,x)$ exists locally uniformly in $(t,x)\in\RR\times\RR$. Let
$$
\hat f(t,x,u)=\lim_{n\to\infty}f(t+\alpha_n,x,u)\quad {\rm and}\quad
\hat u^+(t,x)=\lim_{n\to\infty} u^+(t+\alpha_n,x).
$$
It is clear that $\hat f(t,x,u)$ satisfies (H0) and  $\hat u^+(t,x)$ is uniformly continuous in $(t,x)\in\RR\times\RR$
and $\inf_{t\in\RR,x\in\RR}\hat u^+(t,x)>0$. Moreover,
$\hat u^+(t,x)$ is a bounded positive entire solution of \eqref{main-eq} with $f$ being replaced by $\hat f$.
Similarly, without loss of generality, we may assume that
$\lim_{m\to\infty}\hat u^+(t+\beta_m,x)$ and $\lim_{n\to\infty} u^+(t+\alpha_n+\beta_n,x)$ exist locally uniformly
in $(t,x)\in\RR\times\RR$. Let
$$
\check f(t,x,u)=\lim_{m\to\infty} \hat f(t+\beta_m,x,u),\quad \tilde f(t,x,u)=\lim_{n\to\infty}f(t+\alpha_n+\beta_n,x,u)
$$
and
$$
\check u^+(t,x)=\lim_{m\to\infty}\hat u^+(t+\beta_m,x,u),\quad \tilde u^+(t,x)=\lim_{n\to\infty} u^+(t+\alpha_n+\beta_n,x).
$$
Then $\check f(t,x,u)$ and $\tilde f(t,x,u)$ satisfy (H0), $\check u^+(t,x)$ and $\tilde u^+(t,x)$ are uniformly continuous in $(t,x)\in\RR\times\RR$,
$\check u^+(t,x)$ is a bounded positive entire solution of \eqref{main-eq} with $f$ being replaced by $\check f$, and
$\tilde u^+(t,x)$ is a bounded positive entire solution of \eqref{main-eq} with $f$ being replaced by $\tilde f$.
Note that $\tilde f=\check f$. Then by the uniqueness of positive entire solutions of \eqref{main-eq} with $f$ being replaced by $\check f$, we have
$$
\check u^+(t,x)=\tilde u^+(t,x).
$$
This together with Remark \ref{almost-periodic-rk} implies that $u^+(t,x)$ is almost periodic in $t$ uniformly with respect to $x\in\RR$.
In particular, if $f$ is periodic in $t$, so is $u^+(t,x)$.

Assume that $f(t,x,u)$ is almost periodic in $x$. Similarly, we can proved that $u^+(t,x)$ is almost periodic in $x$ uniformly with respect to $t\in\RR$.
In particular, if $f$ is periodic in $x$, so is $u^+(t,x)$. The theorem is thus proved.
\end{proof}

\section{Stability of transition waves}

In this section, we study the stability of transition waves in the general case and prove Theorem \ref{main-thm-general-case}.

\begin{proof}[Proof of Theorem \ref{main-thm-general-case}]
Suppose that $u=U(t,x)$ is a transition wave of \eqref{main-eq} satisfying \eqref{cond-eq-2} and \eqref{cond-eq1}.

Note that, for given $u_0(\cdot)$ satisfying \eqref{cond-eq3} and given $t_0\in\RR$,
the part metric $\rho(u_0,U(t_0))$ is well defined and then
$\rho(u(t,\cdot;t_0,u_0),U(t+t_0,\cdot))$ is well defined for all $t\ge 0$. To prove \eqref{cond-eq4}, it suffices to prove that
for any $\epsilon>0$,  there is $T>0$ such that
\begin{equation}
\label{proof-thm1-eq1}
\rho(u(t+t_0,\cdot;t_0,u_0),U(t+t_0,\cdot))<\epsilon\quad \text{for
all}\,\, t\ge T.
\end{equation}
In fact, by Proposition \ref{part-metric-prop}(1),  we only need to prove that for any $\epsilon>0$,
\begin{equation}
\label{proof-thm1-eq2}
\rho(u(t+t_0,\cdot;t_0,u_0),U(t+t_0,\cdot))<\epsilon\quad \text{for
some}\,\, t>0.
\end{equation}

Assume by contradiction that there is $\epsilon_0>0$ such that
\begin{equation}
\label{eqq-0} \rho(u(t+t_0,\cdot;t_0,u_0),U(t+t_0,\cdot))\ge
\epsilon_0
\end{equation}
for all $t\ge 0$.
Fix a $\tau>0$. We claim that  if \eqref{eqq-0} holds, then there is $\delta>0$ such that
\begin{align}
\label{main-thm-eq1}
\rho(u(\tau+s+t_0,\cdot;t_0,u_0),U(\tau+s+t_0,\cdot))\le
\rho(u(s+t_0,\cdot;t_0,u_0),U(s+t_0,\cdot))-\delta
\end{align}
for all $s\ge 0$.

Before proving \eqref{main-thm-eq1}, we prove that \eqref{main-thm-eq1} gives rise to a contradiction.
In fact, assume \eqref{main-thm-eq1}. Then  we have
$$
\rho(u(n\tau+t_0,\cdot;t_0,u_0),U(n\tau+t_0,\cdot))\le
\rho(u(t_0,\cdot;t_0,u_0),U(t_0,\cdot))-n\delta
$$
for all $n\ge 0$. Letting $n\to\infty$, we have  $\rho(u(n\tau+t_0,\cdot;t_0,u_0),U(n\tau+t_0,\cdot))\to -\infty$, which is
a contradiction. Therefore, \eqref{eqq-0} does not hold and then for any $\epsilon>0$,
$$
\rho(u(t+t_0,\cdot;t_0,u_0),U(t+t_0,\cdot))<\epsilon\quad \text{for
some}\,\, t>0.
$$
This together with  Proposition \ref{part-metric-prop} implies that
$$
\lim_{t\to\infty}\rho(u(t+t_0,\cdot;t_0,u_0),U(t+t_0,\cdot))=0.
$$
The theorem then follows.

We now prove that if \eqref{eqq-0} holds, then there is $\delta>0$ such that  \eqref{main-thm-eq1} holds.

First, for any $0<\epsilon<\frac{\epsilon_0}{4+2\epsilon_0}$, by \eqref{cond-eq3}, \eqref{cond-eq-1},  and \eqref{cond-eq0},
there is $d_1\gg d_1^*$ such that
\begin{align*}
d^*(1-\epsilon) \phi(t_0,x)-d_1 \phi_1(t_0,x)\le u_0(x)\le d^*(1+\epsilon) \phi(t_0,x)+d_1 \phi_1(t_0,x).
\end{align*}
By \eqref{cond-eq2}, there holds
\begin{align*}
d^*(1-\epsilon) \phi(t,x)-d_1 \phi_1(t,x)\le u(t,x;t_0,u_0)
\le d^* (1+\epsilon)\phi(t,x)+d_1 \phi_1(t_0,x)\quad \forall\,\, t\ge t_0.
\end{align*}
{Then by \eqref{cond-eq0} and \eqref{cond-eq1},  for  $x-X(t)\gg 1$, we have that $0<\frac{\phi_1(t,x)}{\phi(t,x)}\ll 1$,
\begin{align*}
u(t,x;t_0,u_0)&\le d^*(1+\epsilon)\phi(t,x)\Big(1+\frac{d_1}{d^*(1+\epsilon)}\frac{\phi_1(t,x)}{\phi(t,x)}\Big)\\
&\le(1+\epsilon) U(t,x)\Big(1+\frac{d_1}{d^*(1+\epsilon)}\frac{\phi_1(t,x)}{\phi(t,x)}\Big)\Big(1-\frac{d_1^*}{d^*}\frac{\phi_1(t,x)}{\phi(t,x)}\Big)^{-1},
\end{align*}
and
\begin{align*}
u(t,x;t_0,u_0)&\ge  d^*(1-\epsilon)\phi(t,x)\Big(1-\frac{d_1}{d^*(1-\epsilon)}\frac{\phi_1(t,x)}{\phi(t,x)}\Big)\\
&\ge (1-\epsilon) U(t,x)\Big(1-\frac{d_1}{d^*(1-\epsilon)}\frac{\phi_1(t,x)}{\phi(t,x)}\Big)\Big(1+\frac{d_1^*}{d^*}\frac{\phi_1(t,x)}{\phi(t,x)}\Big)^{-1}.
\end{align*}}
This together with \eqref{cond-eq-2} and \eqref{cond-eq0} implies that,  for any $s\ge 0$, there is $x_s(\ge X(t_0+s))$
 such that
 \begin{equation}
 \label{eqq-00}
 \sup_{s\ge 0, t\in [t_0+s,t_0+s+\tau]}|x_s-X(t)|<\infty
 \end{equation}
 and
\begin{align}
\label{eqq-1}
\frac{1}{1+\epsilon_0/2}U(t,x)\le u(t,x;t_0,u_0)\le (1+\epsilon_0/2)U(t,x)\quad \forall\,\, t\in [t_0+s,t_0+s+\tau],\,\, x\ge x_s.
\end{align}


Next, we claim that
\begin{equation}
\label{eqq-5-0}
\inf_{s\ge 0, t\in[s+t_0,\tau+s+t_0],x\le x_s} U(t,x)>0.
\end{equation}
In fact, if this is not true, then there are $s_n\ge 0$, $t_n\in [s_n+t_0,\tau+s_n+t_0]$, and $x_n\le x_{s_n}$ such that
$$
\lim_{n\to\infty}U(t_n,x_n)=0.
$$
By Remark \ref{transition-wave-rk}(1) and \eqref{eqq-00}, there are $\beta>0$ and  $\tilde x_n\le x_n$ such that
$$
U(s_n+t_0,\tilde x_n)\ge \beta
\quad {\rm and}\quad  \sup_{n\ge 1} |x_{s_n}-\tilde x_n|<\infty.
$$
Let
$$
U_n(t,x)=U(t+s_n+t_0,x+\tilde x_{n}).
$$
By the uniform continuity of $U(t,x)$ in $(t,x)\in\RR\times\RR$, without loss of generality, we may assume that
$$
\lim_{n\to\infty}U_n(t,x)=U^*(t,x)
$$
locally uniformly in $(t,x)\in\RR\times\RR$.
Without loss of generality, we may also assume that
$$
\lim_{n\to\infty}\big(t_n-s_n-t_0\big)=t^*,\quad \lim_{n\to\infty }\big(x_n-\tilde x_{n}\big)=x^*,
$$
and
$$
\lim_{n\to\infty} f(t+s_n+t_0,x+\tilde x_{n},u)=f^*(t,x,u)
$$
locally uniformly in $(t,x,u)\in\RR\times\RR\times\RR$.
Then
\begin{equation}
\label{eqq-5-1}
U^*(t,x)\ge 0, \quad
U^*(t^*,0)\ge \beta,\quad U^*(t^*,x^*)=0,
\end{equation}
and  $U^*(t,x)$ is a solution of \eqref{main-eq} with $f(t,x,u)$ being replaced by $f^*(t,x,u)$. Then by comparison
principle, we have either $U^*(t,x)\equiv 0$ or $U^*(t,x)>0$ for all $(t,x)\in\RR\times\RR$,
which contradicts to \eqref{eqq-5-1}. Therefore, \eqref{eqq-5-0} holds.

Now for given $s\ge 0$,
let
$$
\rho(s+t_0)=\rho(u(s+t_0,\cdot;t_0,u_0),U(s+t_0,\cdot)).
$$
By \eqref{eqq-0},
\begin{equation}
\label{eqq-3-0} \rho(t_0)\ge \rho(s+t_0)\ge \epsilon_0
\end{equation}
and
\begin{align}
\label{eqq-3}
\frac{1}{\rho(s+t_0)}U(s+t_0,\cdot)&\le u(s+t_0,\cdot;t_0,u_0)\le \rho(s+t_0) U(s+t_0,\cdot).
\end{align}
It follows from \eqref{eqq-3} and Proposition \ref{comparison-prop}  that
$$
u(t+s+t_0,\cdot;t_0,u_0)\le u(t+s+t_0,\cdot;s+t_0,\rho(s+t_0)U(s+t_0,\cdot))\quad {\rm for}\quad  t\ge 0.
$$

Let
$$
\hat u(t,x)= u(t+s+t_0,\cdot;s+t_0,\rho(s+t_0)U(s+t_0,\cdot)),
$$
$$\tilde u(t,x)=\rho(s+t_0) u(t+s+t_0,\cdot;s+t_0,U(s+t_0,\cdot))\big(=\rho(s+t_0) U(t+s+t_0,x)\big),
$$
and
$$
\bar u(t,x)=\tilde u(t,x)-\hat u(t,x).
$$
Then
\begin{align}
\label{eqq-4}
\bar u_t(t,x)&=\mathcal{A}\bar u(t,x)+\tilde u(t,x) f(t+s+t_0,x,U(t+s+t_0,x))-\hat u(t,x) f(t+s+t_0,x,\hat u(t,x))\nonumber\\
&=\mathcal{A}\bar u(t,x)+p(t,x) \bar u(t,x)+b(t,x),
\end{align}
where
$$
p(t,x)=f(t+s+t_0,x,\hat u(t,x))+\tilde  u(t,x)\int_0^1 f_u(t+s+t_0,x,r\tilde u(t,s)+(1-r)\hat u(t,x))dr,
$$
and
\begin{align*}
b(t,x)&=\tilde u(t,x) \big[f(t+s+t_0,x,U(t+s+t_0,x))-f(t+s+t_0,x,\tilde u(t,x))\big]\\
&= \tilde  u(t,x)\big[f(t+s+t_0,x,U(t+s+t_0,x))-f(t+s+t_0,x,\rho(s+t_0) U(t+s+t_0,x))\big].
\end{align*}
By (H0) and \eqref{eqq-5-0}, there is $b_0>0$ such that for any $s\ge 0$,
\begin{equation}
\label{eqq-5}
\inf_{t\in [s+t_0,\tau+s+t_0],x\le x_s} b(t,x)\ge b_0>0.
\end{equation}

In the case that $\mathcal{A}u=u_{xx}$,
by \eqref{eqq-4} and comparison principle for parabolic equations, we have
$$
\bar u(\tau,\cdot)\ge \int_{t_0+s}^{t_0+s+\tau} e^{(\tau+s+t_0-r) p_{\inf}} T(\tau+s+t_0 -r) b(r,\cdot) dr,
$$
where $p_{\inf}=\inf_{t\in [t_0+s,t_+s+\tau,x\in\RR}p(t,x)$,
$$
T(t)u(x)=\int_{\R^{N}}G(x-y,t)u(y)dy
$$
and
\begin{equation}
\label{heat-kernel}
G(x,t)=\frac{1}{(4\pi t)^{\frac{N}{2}}}e^{-\frac{|x|^{2}}{4t}}.
\end{equation}
This together with \eqref{eqq-5} implies that there is $\delta_0>0$ such that for any $s\ge 0$,
\begin{equation*}
\bar u(\tau,x)\ge \delta_0\quad \forall \,\, x\le x_s
\end{equation*}
and then
\begin{equation}
\label{eqq-6}
u(\tau+s+t_0,x;t_0,u_0)\le \rho(s+t_0)U(t+s+t_0,x)-\delta_0\quad \forall\,\, x\le x_s.
\end{equation}

By \eqref{eqq-1} and \eqref{eqq-6},
then there is $\delta_1>0$ such that
$$
u(\tau+s+t_0,\cdot;t_0,u_0)\le (\rho(s+t_0)-\delta_1) U(\tau+s+t_0,\cdot)\quad \text{for all}\,\, s\ge 0.
$$
Similarly, we can prove that there is $\delta_2>0$ such that
$$
\frac{1}{\rho(s+t_0)-\delta_2}U(\tau+s+t_0,\cdot)\le u(\tau+s+t_0,\cdot;t_0,u_0)\quad \text{for all}\,\, s\ge 0.
$$
The claim \eqref{main-thm-eq1} then holds for
 $\delta=\min\{\delta_1,\delta_2\}$.

In the case that $\mathcal{A}u(t,x)=\int_{\RR}\kappa(y-x)u(t,y)dy-u(t,x)$,  we have that $\bar u(t,x)$ satisfies
$$
\bar u_t=\int_{\RR}\kappa(y-x)\bar u(t,y)dy-\bar u(t,x)+p(t,x)\bar u(t,x)+b(t,x)\quad \forall\,\, x\in\RR.
$$
Note that
$$\int_{\RR}\kappa(y-x)\bar u(t+s+t_0,y)dy\ge 0.
$$
Hence for $x\le x_s$,
$$
\bar u_t(t,x)\ge (-1+ p(t,x))\bar u(t,x)+b_0.
$$
This implies that
$$
\bar u(\tau,x)\ge \int_{s+t_0}^{\tau+s+t_0} e^{\big(-1+p_{\inf}\big)(\tau+s+t_0-r)}b_0 dr\quad \forall\,\, x\le x_s.
$$
Then by the similar arguments as in the above,
the claim \eqref{main-thm-eq1} then holds for
 some $\delta>0$.
\end{proof}

\section{Existence and stability of transition waves}

In this section, we consider the stability of transition waves of \eqref{main-eq}
established in literature  and prove Theorem \ref{main-thm-special-case}.

\begin{proof}[Proof of Theorem \ref{main-thm-special-case} (1)] As it is mentioned in Remark \ref{special-case-rk1}(1),
 the existence of transition waves is established in \cite{NaRo2}.
In the following, we outline the construction of transition waves from \cite{NaRo2} and  show that they satisfy the conditions in Theorem \ref{main-thm-general-case} and hence are asymptotically stable.

 Assume (H1) and let $a(t,x)=f(t,x,0)$.
For any $\mu>0$, by \cite[Lemma 3.1]{NaRo2}, the equation
$$
u_t=u_{xx}+a(t,x)u
$$
has a positive solution of the form
$$
u_\mu(t,x)=e^{-\mu x}\eta_\mu(t,x),\quad {\rm where}\quad \eta_\lambda(t,x+p)=\eta_\lambda(t,x).
$$
By Lemma \cite[Lemma 3.2]{NaRo2}, there are $\beta>0$ and a uniformly Lipschitz continuous function $S_\mu(t)$ such that
$$
|S_\mu(t)-\frac{1}{\mu}\ln \|\eta_\mu(\cdot,t)\|_{L^\infty}|\le \beta \quad \forall\,\, t\in\RR.
$$
Let
$$
c_\mu=\liminf_{t-s\to\infty}\frac{S_\mu(t)-S_\mu(s)}{t-s}.
$$
By \cite[Lemma 3.4]{NaRo2},
there is $\mu^*>0$ such that for $0<\mu<\mu^*$, $c_\mu$ is decreasing for $\mu\in (0,\mu^*)$ and this does not hold
for $\mu \in (0,\tilde\mu^*)$ for any $\tilde\mu^*>\mu^*$.
Let $c^*=c_{\mu^*}$. Note that when $a(x)\equiv a$, $c^*=2\sqrt a$.
We show that Theorem \ref{main-thm-special-case}(1) holds with this $c^*$.

To this end,  for fixed $\mu>0$,
let
$$
\tilde \phi_\mu(t,x)=e^{-\mu S_\mu(t)}\eta_\mu(t,x).
$$
By equation (38) in \cite{NaRo2},  there is $C_\mu>0$ such that
$$
C_\mu\le \tilde \phi_\mu(t,x)\le e^{\mu \beta}\quad \forall\,\, x\in\RR,\,\, t\in\RR.
$$
Note that for any $c>c^*$, there is $0<\mu<\mu^*$ such that $c_\mu=c$. Let $\mu^{'}$ be such that
$\mu<\mu^{'}<(1+\nu)\mu$. By \cite[Lemma 3.2]{NaRo1}, there is $\sigma(\cdot)\in W^{1,\infty}(\RR)$ such that
$$
\inf \{ \sigma^{'}(t)+\mu^{'}(c_\mu(t)-c_{\mu^{'}}(t)\}>0,
$$
where $c_\mu(t)=S^{'}_\mu(t)$ and $c_{\mu^{'}}(t)=S_{\mu^{'}}^{'}(t)$.
Let
$$
\phi(t,x)=e^{-\mu(x-S_\mu(t)}\tilde \phi_\mu(t,x)\quad {\rm and}\quad
\phi_1(t,x)=e^{\sigma(t)-\mu^{'}(x-S_\mu(t))}\tilde \phi_{\mu^{'}}(t,x).
$$
By the arguments of \cite[Theorem 2.1]{NaRo2},  for any $u_0\in C_{\rm unif}^b(\RR)$ $(u_0(x)\ge 0)$ and $t_0\in\RR$ with
$$
 u_0(x)\le d\phi(t_0,x)+d_1 \phi_1(t_0,x) \quad \big({\rm resp.}\,\, u_0(x)\ge  d\phi(t_0,x)-d_1 \phi_1(t_0,x)\big)\quad \forall\,\, x\in\RR
$$
for some $d>0$ and $d_1>0$ (resp., for some $d>0$ and $d_1\gg 1$), there holds
$$
u(t,x;t_0,u_0)\le d\phi(t,x)+d_1 \phi_1(t,x) \quad \big({\rm resp.}\,\, u(t,x;t_0,u_0)\ge  d\phi(t,x)-d_1 \phi_1(t,x)\big)\quad \forall\,\, x\in\RR
$$
for all $t\ge t_0$.
Moreover, there is a transition wave solution
$u=U_\mu(t,x)$ of \eqref{main-eq} satisfying that
$$
\phi(t,x)-d_\mu \phi_1(t,x)\le U_\mu(t,x)\le \phi(t,x)\quad \forall\,\, t,x\in\RR
$$
for some $d_\mu\gg 1$.
Clearly, $X(t)=S_\mu(t)$ is an interface location of $u=U_\mu(t,x)$ satisfying \eqref{cond-eq-2}, and $\phi(t,x)$ and $\phi_1(t,x)$ satisfy \eqref{cond-eq-1} and \eqref{cond-eq0}.
 It then follows from Theorem \ref{main-thm-general-case} that $u=U_\lambda(t,x)$ is asymptotically stable.  Clearly, $u=U_{\mu}(t,x)$ has least mean speed $c$. Theorem \ref{main-thm-special-case}(1) thus follows.
\end{proof}

\begin{proof}[Proof of Theorem \ref{main-thm-special-case} (2)]
 As it is mentioned in Remark \ref{special-case-rk1}(2),
 the existence of transition waves is established in \cite{NaRo3}.
 Similarly, we outline the construction of transition waves from \cite{NaRo3} and  show that they satisfy the conditions in Theorem \ref{main-thm-general-case} and hence are asymptotically stable.

Assume (H2) and let $a(x)=f(x,0)$.
By \cite[Proposition 1.3]{NaRo3},  for any $\lambda>\lambda_0$, there is a unique positive $\phi_\lambda\in C^2(\RR)$ such that
$$
\phi^{''}_\lambda+a(x)\phi_\lambda(x)=\lambda\phi_\lambda(x) \quad {\rm in}\,\,\RR, \,\,\, \phi_\lambda(0)=1,\,\, \lim_{x\to\infty}\phi_\lambda(x)=0,
$$
and there exists the limit
\begin{equation}
\label{ap-eq1}
\mu(\lambda):=-\lim_{x\to \pm \infty}\frac{1}{x}\ln \phi_\lambda(x)>0.
\end{equation}
By \cite[Lemma 2.3]{NaRo3}, $\phi_\lambda(x)$ is unbounded, and by \cite[Lemma 2.4]{NaRo3},
$\phi^{'}_\lambda(x)/\phi_\lambda(x)$ is almost periodic in $x$.
Let
$$
c^*=\inf_{\lambda>\lambda_0}\frac{\lambda}{\mu(\lambda)}.
$$
In the following we show that Theorem \ref{main-thm-special-case}(2) holds with this $c^*$.

By \cite[Lemma 3.2]{NaRo3}, for any $c>c^*$, there is $\lambda>\lambda_0$ such that
$$
c=\frac{\lambda}{\mu(\lambda)}\quad {\rm and}\quad c>\frac{\lambda^{'}}{\mu(\lambda^{'})}\quad {\rm for}\,\,\, \lambda^{'}-\lambda>0\,\,\text{small enough}.
$$
Let $\sigma_\lambda(x)=-\frac{\phi_\lambda^{'}(x)}{\phi_\lambda(x)}$. By \cite[Proposition 3.3]{NaRo3}, there exist $\delta>0$, $\epsilon\in (0,1)$,
and a function $\theta\in C^2(\RR)\cap L^\infty(\RR)$ such that
$$
\inf_{x\in\RR} \theta(x)>0,\quad -\theta^{''}+2\sigma_\lambda\phi^{'}-(\sigma^2_\lambda-\sigma^{'}_\lambda+a)\theta\ge (\delta-(1+\epsilon)\lambda)\theta\quad {\rm in}\,\,\RR.
$$
By the arguments of \cite[Proposiiton 3.4]{NaRo3}, for any $t_0\in\RR$ and $u_0\in C_{\rm unif}^b(\RR)$ ($u_0\ge 0$) with
$$
u_0(x)\le d\phi_\lambda(x) e^{\lambda t_0}+ d_1 \theta(x) \phi_\lambda^{1+\epsilon}e^{(1+\epsilon)\lambda t_0}$$
$$
\big({\rm resp.},
u_0(x)\ge d\phi_\lambda(x) e^{\lambda t_0}- d_1 \theta(x) \phi_\lambda^{1+\epsilon}e^{(1+\epsilon)\lambda t_0}\big)
$$
for some $d>0$ and $d_1>0$ (resp., for $d>0$ and $d_1\gg 1$),
 there holds
 $$
u(t,x;t_0,u_0)\le d\phi_\lambda(x) e^{\lambda t}+ d_1 \theta(x) \phi_\lambda^{1+\epsilon}e^{(1+\epsilon)\lambda t}
$$
$$
\big({\rm resp.},
u(t,x;t_0,u_0)\ge d\phi_\lambda(x) e^{\lambda t}- d_1 \theta(x) \phi_\lambda^{1+\epsilon}e^{(1+\epsilon)\lambda t}\big)
$$
 for all $t\ge t_0$.
 Moreover, there is a transition wave $u(t,x)=U_\lambda(t,x)$ of \eqref{main-eq} with mean speed $c$ and
satisfying that
$$
\phi_\lambda(x)e^{\lambda t}-d_1 \theta(x) \phi_\lambda^{1+\epsilon}e^{(1+\epsilon)\lambda t}\le U_\lambda(t,x)\le \phi_\lambda(x) e^{\lambda t}
$$
for some $d_1\gg 1$.

Let $X(t)$ be such that $\phi_\lambda(X(t))=e^{-\lambda t}$, that is,
$\int_0^{X(t)}\frac{\phi_\lambda^{'}(y)}{\phi_\lambda(y)}dy=-\lambda t$. By \cite[Lemma 3.5]{NaRo3}, $X(t)$ is an interface location of $u=U_\lambda(t,x)$.
By \cite[Proposition 4.2]{HaRos}, $X(t)$ satisfies \eqref{cond-eq-2}.
Let
$$
\phi(t,x)=\phi_\lambda(x)e^{\lambda t},\quad \phi_1(t,x)=\theta(x) \phi_\lambda^{1+\epsilon}e^{(1+\epsilon)\lambda t}.
$$
It is clear  that $\phi(t,x)$ and $\phi_1(t,x)$ satisfy \eqref{cond-eq-1}.  Note that
\begin{align}
\label{ap-eq2}
\frac{\phi_1(t,x+X(t))}{\phi(t,x+X(t))}=\frac{\Big(e^{\int_0^{x+X(t)}\frac{\phi^{'}_\lambda(y)}{\phi_\lambda(y)}dy}e^{\lambda t}\Big)^{1+\epsilon}}{e^{\int_0^{x+X(t)}\frac{\phi^{'}_\lambda(y)}{\phi_\lambda(y)}dy}e^{\lambda t}}=e^{\epsilon\int_{X(t)}^{x+X(t)}\frac{\phi^{'}_\lambda(y)}{\phi_\lambda(y)}dy}.
\end{align}
By the almost periodicity of $\frac{\phi^{'}_\lambda(x)}{\phi_\lambda(x)}$, \eqref{ap-eq1}, and \eqref{ap-eq2},
$$
\lim_{|x|\to\infty}\frac{1}{x}\int_{X(t)}^{x+X(t)}\frac{\phi^{'}_\lambda(y)}{\phi_\lambda(y)}dy =-\mu(\lambda)<0
$$
uniformly in $t\in\RR$. This implies that $\phi_1(t,x)$ and $\phi_2(t,x)$ satisfy \eqref{cond-eq0}.
It then follows from Theorem \ref{main-thm-general-case} that $u=U_\lambda(t,x)$ is an asymptotically stable transition wave of \eqref{main-eq}
with average speed $c$.
\end{proof}

\begin{proof} [Proof of Theorem \ref{main-thm-special-case}(3)]  The existence of transition waves is established in \cite{Zla}.
In the following, we outline the construction of transition waves from \cite{Zla} and  show that they are asymptotically stable by using the arguments
in the proof of  Theorem \ref{main-thm-general-case}.

Recall that
$$
\lambda_0=\sup\sigma[\p_{xx}^2+a(\cdot)],\quad \lambda_1=2a_-.
$$
Note that $\lambda_0\ge a_-$. By the arguments of \cite[Theorem 1.1]{Zla}, for any $\lambda>\lambda_0$, there is
a unique $\phi_\lambda(x)$ such that
$$
\phi_\lambda^{''}(x)+a(x)\phi_\lambda(x)=\lambda\phi_\lambda(x)\quad {\rm for}\quad x\in\RR
$$
and
$$
\phi_\lambda(0)=1,\quad \lim_{x\to\infty}\phi_\lambda(x)=0.
$$

Fix $\lambda\in (\lambda_0,\lambda_1)$ and  $1-\frac{\lambda_1-\lambda}{a_+}<\alpha<1$. Let $U_{g,\sqrt \alpha}(x)$ be the traveling front profile for the PDE
$$
u_t=u_{xx}+g(u)
$$
with propagating speed $c_{1,\sqrt a}=\sqrt \alpha+\frac{1}{\sqrt \alpha}> 2$ and
$$
\lim_{x\to\infty} U_{g,\sqrt \alpha}(x) e^{\sqrt \alpha x}=1.
$$
 Let
$$
h_{g,\alpha}(v)=U_{g,\sqrt a}(-\alpha^{-\frac{1}{2}}\ln v)
$$
for $v>0$ and $h_{g,\sqrt \alpha}(0)=0$. Then
$h^{'}_{g,\sqrt \alpha}(0)=1$, and by \cite[(2.5)]{Zla},
$$ h_{g,\sqrt \alpha}(v)\le v\quad \forall\, v\in [0,\infty).
$$

For any $\lambda\in (\lambda_0,\lambda_1)$, let
$$
u^+(t,x)=\phi_\lambda(x) e^{\lambda t}\quad {\rm and}\quad
u^-(t,x)=h_{g,\sqrt \alpha}(\phi_{\lambda}(x) e^{\lambda t}).
$$
By \cite[Lemma 2.1, Theorem 1.1]{Zla}, $u^+(t,x)$ is a super-solution of \eqref{main-eq} and
$u^-(t,x)$ is a sub-solution of \eqref{main-eq}, and there is a transition wave $u(t,x)=U_\lambda(t,x)$ of \eqref{main-eq}
satisfying
\begin{equation}
\label{zl-eq1}
u^-(t,x)\le U_\lambda(t,x)\le u^+(t,x).
\end{equation}
We show that this transition wave is asymptotically stable by applying the arguments in the proof of
Theorem \ref{main-thm-general-case}.

To this end,  first, let
$$
w^+(t,x)=u^+(t,x)-u^-(t,x)(\ge 0).
$$
We have
\begin{align*}
w^+_t(t,x)-w^+_{xx}-a(x)w^+(t,x)&=-(u^-_t-u^-_{xx}-a(x)u^-(t,x))\\
&\ge -u^-(t,x) f(x,u^-(t,x))+a(x) u^-(t,x)\\
&=u^-(t,x)(a(x)-f(x,u^-(t,x))\\
&\ge 0.
\end{align*}
This implies that
$$
d_1 w^+_t(t,x)\ge d_1 w^+_{xx}+d^1 a(x) w^+(t,x)
$$
for any $d_1>0$. Let
$$
\phi(t,x)=u^+(t,x),\quad \phi_1(t,x)=w^+(t,x).
$$
We have that $u(t,x)=d \phi(t,x)+d_1 \phi_1(t,x)$ is a super-solution of \eqref{main-eq} for any $d,d_1>0$.
Note that
$$
\lim_{x\to\infty}\phi(t,x)=\lim_{x\to\infty}\phi_1(t,x)=0\quad {\rm and} \quad \lim_{x\to -\infty} \phi(t,x)=\lim_{x\to -\infty}\phi_1(t,x)=\infty
$$
locally uniformly in $t$.

Next, for any $M>0$, let
$$
g_M(u)=g(u)-Mu^2
$$
and $U_{g_M,\sqrt \alpha}$ be the traveling front profile of
$$
u_t=u_{xx}+g_M(u)
$$
with propagating speed $\sqrt \alpha+\frac{1}{\sqrt\alpha}$ and $\lim_{x\to\infty} U_{g_M,\sqrt \alpha}(x) e^{\sqrt \alpha x}=1$.
Let
$$
h_{g_M,\sqrt \alpha}(v)=U_{g_M,\sqrt a}(-\alpha^{-\frac{1}{2}}\ln v)
$$
for $v>0$ and $h_{g_M,\sqrt \alpha}(0)=0$. Then
$$
h_{g_M,\sqrt\alpha}^{'}(0)=1\quad {\rm and}\quad h_{g_M,\sqrt\alpha}(v)\le h_{g,\sqrt\alpha}(v)\le v.
$$
Similarly, by \cite[Lemma 2.1, Theorem 1.1]{Zla},  we have that $\psi_M(t,x):=h_{g_M,\sqrt \alpha}(\phi_{\lambda}(x) e^{\lambda t})$ is a sub-solution of \eqref{main-eq}. This also implies that $u=d \psi_M(t,x)$ is  a sub-solution of \eqref{main-eq} for any $0<d\le 1$.

Now, note that for any $M>0$ and $d_1>0$,
\begin{equation}
\label{zl-eq2}
\psi_M(t,x)\le U_\lambda(t,x)\le \phi(t,x)+d_1\phi_1(t,x)\quad \forall\,\, t,x\in\RR
\end{equation}
and
$$
\lim_{M\to\infty} \psi_M(t,x)=0
$$
uniformly in $(t,x)\in\RR\times\RR$. Hence
for any given $u_0$ satisfying \eqref{cond-eq3} and \eqref{cond-eq4}, for any $\epsilon>0$, there are $M>0$ and $d_1>0$ such that
\begin{equation*}
(1-\epsilon)\psi_M(t_0,x)\le u_0(x)\le (1+\epsilon)\phi(t_0,x)+d_1\phi_1(t_0,x)
\end{equation*}
and then
\begin{equation}
\label{zl-eq3}
(1-\epsilon)\psi_M(t,x)\le u(t,x;t_0,u_0)\le (1+\epsilon)\phi(t,x)+d_1\phi_1(t,x)\quad \forall\, \, t\ge t_0,\,\,\, x\in\RR.
\end{equation}
By \eqref{zl-eq2} and \eqref{zl-eq3}, we have that
\begin{align}
\label{zl-eq4}
u(t,x;t_0,u_0)\le (1+\epsilon) U_\lambda(t,x)\frac{\phi(t,x)}{\psi_M(t,x)}\Big(1+ d_1\frac{\phi_1(t,x)}{\phi(t,x)}\Big)\quad \forall\,\, t\ge t_0,\,\, x\in\RR
\end{align}
and
\begin{equation}
\label{zl-eq5}
u(t,x;t_0,u_0)\ge (1-\epsilon) U_\lambda(t,x)\frac{\psi_M(t,x)}{\phi(t,x)}\Big(1+ d_1\frac{\phi_1(t,x)}{\phi(t,x)}\Big)^{-1}\quad \forall\,\, t\ge t_0,\,\, x\in\RR.
\end{equation}

Let $X(t)$ be an interface location of $U_\lambda(t,x)$. By \cite[Proposition 4.2]{HaRos}, $X(t)$ satisfies \eqref{cond-eq-2}.
Note that
$$
\lim_{x\to\infty} U_\lambda(t,x+X(t))=0
$$
uniformly in $t\in\RR$. By \eqref{zl-eq1} and \eqref{zl-eq2},
$$
\lim_{x\to\infty} h_{g_M,\sqrt \alpha}(\phi_\lambda(x+X(t))e^{\lambda t})=\lim_{x\to\infty} h_{g,\alpha}(\phi_\lambda(x+X(t))e^{\lambda t})=0
$$
uniformly in $t\in\RR$. This implies that
$$
\lim_{x\to\infty} \phi_\lambda(x+X(t))e^{\lambda t}=0
$$
uniformly in $t\in\RR$. Hence
$$
\lim_{x\to\infty}\frac{\psi_M(t,x+X(t))}{\phi(t,x+X(t))}=\lim_{x\to\infty} \frac{h_{g_M,\sqrt \alpha}(\phi_\lambda(x+X(t))e^{\lambda t})}{\phi_\lambda(x+X(t))e^{\lambda t}}=1
$$
and
$$ \lim_{x\to\infty}\frac{\phi_1(t,x+X(t))}{\phi(t,x+X(t))}=\lim_{x\to\infty} \Big( 1-\frac{h_{g,\sqrt \alpha}(\phi_\lambda(x+X(t))e^{\lambda t})}{\phi_{\lambda}(x+X(t))e^{\lambda t}}\Big)=0
$$
uniformly in $t\in\RR$.
This together with \eqref{zl-eq4} and \eqref{zl-eq5} implies that, for any given $\epsilon_0>0$ with $\frac{\epsilon_0}{4+2\epsilon_0}>\epsilon$,
for any $s\ge 0$, there is $x_s(\ge X(t_0+s))$
 such that
 \begin{equation}
 \label{zl-eq6}
 \sup_{s\ge 0, t\in [t_0+s,t_0+s+\tau]}|x_s-X(t)|<\infty
 \end{equation}
 and
\begin{align}
\label{zl-eq7}
\frac{1}{1+\epsilon_0/2}U_\lambda(t,x)\le u(t,x;t_0,u_0)\le (1+\epsilon_0/2)U_\lambda(t,x)\quad \forall\,\, t\in [t_0+s,t_0+s+\tau],\,\, x\ge x_s.
\end{align}
It then follows from the arguments after \eqref{eqq-1} in the proof of  Theorem \ref{main-thm-general-case} that,
for any $\epsilon>0$,
\begin{equation*}
\rho(u(t+t_0,\cdot;t_0,u_0),U_\lambda(t+t_0,\cdot))<\epsilon\quad \text{for
some}\,\, t>0.
\end{equation*}
 Then by Proposition \ref{part-metric-prop}(1),
 \begin{equation*}
\rho(u(t+t_0,\cdot;t_0,u_0),U_\lambda(t+t_0,\cdot))<\epsilon\quad \text{for
some}\,\, t\gg 1.
\end{equation*}
Therefore,
$u=U_\lambda(t,x)$ is asymptotically stable.
\end{proof}

\begin{proof}[Proof of Theorem \ref{main-thm-special-case} (4)]
Note that the existence of transition waves is established in \cite{RaShZh}.
In the following, we outline the construction of transition waves from \cite{RaShZh} and  show that they satisfy the conditions in Theorem \ref{main-thm-general-case} and hence are asymptotically stable.

First of all, consider
\begin{equation}
\label{linear-eq2}
v_t=\int_{\RR} e^{-\mu(y-x)}\kappa(y-x)v(t,y)dy-v(t,x)+a(t,x)v(t,x),\quad x\in\RR,
\end{equation}
where $\mu\in\RR$, $a(t,x)=f(t,x,0)$.
 Assume that $a(t,x+p)=a(t+T,x)=a(t,x)$.
By \cite[Propositions 3.2, 3.4, 3.5]{RaShZh}, we have
\begin{itemize}
\item[(i)] For given $\mu>0$, there are $\lambda(\mu)\in\RR$ and  a continuous function $v(t,x;\mu)$ such that
$$v(t+T,x;\mu)=v(t,x+p;\mu)=v(t,x;\mu),\quad \inf_{(t,x)\in\RR\times\RR} v(t,x;\mu)>0,
$$
 $$\|v(\cdot,\cdot;\mu)\|_\infty=1,
 $$
 and
$$
v(t,x):=e^{\lambda(\mu) t} v(t,\cdot;\mu)
$$
is a solution of \eqref{linear-eq2}.

\item[(ii)] There is $\mu^*>0$ such that
$$
\frac{\lambda(\mu^*)}{\mu^*}=\inf_{\mu>0}\frac{\lambda(\mu)}{\mu}
$$
and
$$
\frac{\lambda(\mu)}{\mu}>\frac{\lambda(\mu^*)}{\mu^*}\quad {\rm for}\quad 0<\mu<\mu^*.
$$
\end{itemize}

\noindent Let $c^*=\frac{\lambda(\mu^*)}{\mu^*}$. We show that  Theorem \ref{main-thm-special-case} (4) holds with this $c^*$.

To this end, for given $0<\mu<\mu^*$, choose $\mu_1$ such that $\mu<\mu_1<\min\{\mu^*,2\mu\}$. Let $c_\mu=\frac{\lambda(\mu)}{\mu}$.
Let
$$
\phi(t,x)=e^{-\mu (x-\frac{1}{\mu}c_\mu t)}v(t,x;\mu),\quad \phi_1(t,x)=e^{-\mu_1 (x-\frac{1}{\mu}c_\mu t)}v(t,x;\mu_1),
$$
where $v(t,x;\mu)$ and $v(t,x;\mu_1)$ are as in (i).
By the arguments of \cite[Propositions 3.2, 3.5]{ShZh1} and \cite[Propositions 5.1,5.2]{RaShZh},
 there is $d_0>0$ such that for any $0<d<1$, $d_1>d_0 d$, and any $t_0\in\RR$, $u_0\in C_{\rm unif}^b(\RR)$ ($u_0(x)\ge 0$)  satisfying
\begin{align*}
d\phi(t_0,x)-d_1\phi_1(t_0,x)\le u_0(x)\le d\phi(t_0,x)+d_1 \phi_1(t_0,x),
\end{align*}
there holds
\begin{align}
\label{IVP-eq3}
d\phi(t,x)-d_1 \phi_1(t,x)
\le u(t,x;t_0,u_0)\le d\phi(t,x)+d_1 \phi_1(t,x)\quad \forall\,\, t\ge t_0.
\end{align}

Fix $0<d^*<1$ and $d_1^*>d_0 d^*$. By the arguments of \cite[Theorem 2.4]{ShZh1} and \cite[Theorem 5.1]{RaShZh},  there is a
uniformly continuous  periodic
transition wave solution $u=U(t,x)$ satisfying
\begin{align}
\label{transition-wave-eq2}
d^*\phi(t,x)-d_1^*\phi_1(t,x)\le U(t,x)
\le d^*\phi(t,x)+d_1^* \phi_1(t,x).
\end{align}
Clearly, $X(t)=\frac{c_\mu}{\mu} t$ is an interface location of $U(t,x)$ and satisfies \eqref{cond-eq-2},  and $\phi(t,x)$, $\phi_1(t,x)$ satisfy \eqref{cond-eq-1} and \eqref{cond-eq0}.
By \eqref{IVP-eq3}, \eqref{transition-wave-eq2}, and Theorem \ref{main-thm-general-case}, we have that $u=U(t,x)$ is a periodic wave solution with  speed $c_\mu=\frac{\lambda(\mu)}{\mu}$ and is asymptotically stable.
\end{proof}


\begin{thebibliography}{99}




\bibitem{ArWe1}  { D. G. Aronson and H. F. Weinberger},
{ Nonlinear
diffusion in population genetics, combustion, and nerve pulse
propagation}, in ``Partail Differential Equations and Related
Topics'' (J. Goldstein, Ed.), Lecture Notes in Math., Vol. 466,
Springer-Verlag, New York, 1975, pp. 5-49.

\bibitem{ArWe2} { D. G. Aronson and H. F. Weinberger}, {  Multidimensional
nonlinear diffusions arising in population genetics},  {\it Adv.
Math.}, {\bf 30} (1978), pp. 33-76.

\bibitem{BaZh} { P.W. Bates and G. Zhao}, {  Existence, uniqueness and stability of the stationary solution to a nonlocal
evolution equation arising in population dispersal},  {\it J Math Anal Appl}, {\bf  332(1)} (2007), pp. 428-440.

\bibitem{BeCoVo} { H. Berestycki, J. Coville, and H.-H. Vo}, { Persitence critria for populations with non-local dispersion},
 {\it J. Math. Biol.}, {\bf 72} (2016), no. 7, pp. 1693-1745.

\bibitem{BeHaRoq} { H. Berestycki, F. Hamel, and L. Roques},
{  Analysis of the periodically fragmented environment model. I.
Species persistence}, {\it J Math Biol}, {\bf 51(1)} (2005), pp. 75-113.

\bibitem{BeHaRos} { H. Berestycki, F. Hamel, and L. Rossi}, {  Liouville-type results for semilinear elliptic equations in unbounded
domains},  {\it Ann Mat Pura Appl}, (4) {\bf 186(3)} (2007), pp. 469-507.


\bibitem{BeHaNa1} {H. Berestycki, F. Hamel, and N. Nadirashvili}, { The
speed of propagation for KPP type problems, I - Periodic framework},
{\it J. Eur. Math. Soc.}, {\bf 7} (2005), pp. 172-213.

\bibitem{BeHaNa2} { H. Berestycki, F. Hamel, and N. Nadirashvili},
{ The
speed of propagation for KPP type problems, II - General domains},
{\it J. Amer. Math. Soc.}, {\bf 23} (2010), pp. 1-34.



\bibitem{BeHaRo} { H. Berestycki, F. Hamel, and L. Roques}, { Analysis
of periodically fragmented environment model: II - Biological
invasions and pulsating traveling fronts},  {\it J. Math. Pures Appl.}, {\bf 84} (2005), pp. 1101-1146.

\bibitem{BeHa07} H. Berestycki and F. Hamel, Generalized travelling waves for reaction-diffusion equations. Perspectives in nonlinear partial differential equations, 101-123, \textit{Contemp. Math.}, {\bf 446}, Amer. Math. Soc., Providence, RI, 2007.

\bibitem{BeHa12} H. Berestycki and F. Hamel, Generalized transition waves and their properties. \textit{Comm. Pure Appl. Math.} {\bf 65} (2012), no. 5, 592-648.

\bibitem{Bra} { M. Bramson}, { Convergence of Solutions of the Kolmogorov Equations to
Traveling Waves},  {\it Mem. Amer.
Math. Soc.},  {\bf 44} (1983), No. 285.

\bibitem{CaCo} { R.S. Cantrell and C. Cosner}, {On the effects of spatial heterogeneity on the persistence of interacting species},
{\it J. Math. Biol.}, {\bf 37}(2) (1998), pp. 103-145.

\bibitem{CaSh} F. Cao and W. Shen, Spreading speeds and transition fronts of lattice KPP equations in time heterogeneous media, submitted   (see also  https://arxiv.org/abs/1608.00651).

\bibitem{CaCh} {J. Carr and A. Chmaj}, { Uniqueness of travelling waves for nonlocal monostable equations}, {\it Proc. Amer. Math. Soc.},
 {\bf 132} (2004), no. 8, pp. 2433-2439.

 \bibitem{CoDu} { J. Coville and L. Dupaigne},
  {Propagation speed of travelling fronts in non local reaction-diffusion equations},  {\it Nonlinear Anal.},
  {\bf  60} (2005), no. 5, pp. 797-819.

\bibitem{CoDaMa} {J. Coville, J. D\'{a}vila and S. Mart\'{i}nez}, {Pulsating fronts for nonlocal dispersion and KPP nonlinearity},
 {\it Ann. Inst. H. Poincar\'{e} Anal. Non Lin\'{e}aire}, {\it 30} (2013), no. 2, pp. 179-223.

\bibitem{Fin} {A. M. Fink}, {
Almost Periodic Differential Equations}, Lecture Notes in Mathematics, 377,
Springer-Verlag, Berlin/Heidelberg/New York, 1974

\bibitem{Fisher} { R. Fisher}, {The wave of advance of advantageous genes},
{\it Ann. of Eugenics}, {\bf 7} (1937), pp. 335-369.

\bibitem{FrGa} {M. Freidlin and J. G\" artner}, {On the propagation of
concentration waves in periodic and ramdom media}, {\it Soviet Math.
Dokl.}, {\bf 20} (1979), pp. 1282-1286.

\bibitem{Fri}
{A. Friedman}, {Partial Differential Equations of Parabolic Type},
Prentice--Hall, Inc., Englewood Cliffs, N.J., 1964.


\bibitem{Ham} {F. Hamel}, { Qualitative properties of monostable
pulsating fronts : exponential decay and monotonicity},
{\it J. Math. Pures Appl.}, {\bf (9) 89} (2008),
pp.  355-399.


\bibitem{HaRo} F. Hamel and L.  Roques,  Uniqueness and stability properties of monostable pulsating fronts,
 {\it J. Eur. Math. Soc.}, {\bf 13} (2011), no. 2, 345-390.






 \bibitem{HaRos} F. Hamel, and L. Rossi,
Transition fronts for the Fisher-KPP equation, {\it
Trans. Amer. Math. Soc.}, {\bf 368} (2016), no. 12, pp. 8675-8713.



\bibitem{HePaSt} {S. Heinze, G. Papanicolaou, and A. Stevens},
{ A
variational principle for propagation speeds in inhomogeneous media},
{\it SIAM J. Appl. Math.}, {\bf 62} (2001), pp. 129-148.


\bibitem{Hen} { D. Henry}, { Geometric Theory of Semilinear Parabolic
Equations}, Lecture Notes in Math., 840, Springer-Verlag,
Berlin, 1981.

\bibitem{HuSh}  {J. H. Huang and W. Shen}, { Speeds of spread
and propagation  for KPP models in time almost and space periodic
media},   {\it SIAM J. Applied Dynamical  Systems}, {\bf 8}  (2009), pp. 790-821.

\bibitem{HuZi1} { W. Hudson and B. Zinner},
{Existence of traveling waves for reaction diffusion equations of
Fisher type in periodic media},   Boundary value problems for
functional-differential equations,  187--199, World Sci. Publ.,
River Edge, NJ, 1995.






\bibitem{Kam}  { Y. Kametaka},
{On the nonlinear diffusion equation of Kolmogorov-Petrovskii-
Piskunov type},  {\it Osaka J. Math.}, {\bf 13} (1976), pp. 11-66.


\bibitem{KPP} {A. Kolmogorov, I. Petrowsky, and N. Piscunov},
{A study of the equation of diffusion with increase in the quantity
of matter, and its application to a biological problem},
{\it  Bjul. Moskovskogo Gos. Univ.}, {\bf 1} (1937), pp. 1-26.

 \bibitem{KoSh} {L. Kong and W. Shen},
  { Liouville type property and spreading speeds of KPP equations in periodic media with localized spatial inhomogeneity},
   {\it J. Dynam. Differential Equations}, {\bf 26} (2014), no. 1, pp. 181-215.

\bibitem{LiYiZh} {X. Liang, Y. Yi, and X.-Q. Zhao},
{Spreading speeds and traveling waves for periodic evolution systems},
{\it J. Diff. Eq.}, {\bf 231} (2006),  no. 1, pp. 57-77.


 \bibitem{LiZh} { X. Liang and X.-Q. Zhao},
 {Asymptotic speeds of spread and traveling waves for monotone semiflows
  with applications},  {\it Comm. Pure Appl. Math.}, {\bf 60}  (2007),  no. 1,
  pp. 1-40.

  \bibitem{LiZh1} {X. Liang and X.-Q. Zhao},
  {Spreading speeds and traveling waves for abstract monostable evolution systems},
   {\it J. Funct. Anal.}, {\bf  259} (2010), no. 4, pp. 857-903


\bibitem{LiZl} {T. Lim and A. Zlato\v{s}}, {Transition fronts for inhomogeneous Fisher-KPP reactions and non-local diffusion},
 {\it Trans. Amer. Math. Soc.},  {\bf 368}  (2016), no. 12, 8615–8631.

 \bibitem{Mat} {H. Matano}, {Traveling waves in spatially
random media}, {\it RIMS Kokyuroku}, {\bf 1337} (2003), pp. 1-9.





\bibitem{MiSh2} {J. Mierczynski and W. Shen}, { Spectral Theory for Random and Nonautonomous
Parabolic Equations and Applications},   Chapman \& Hall/CRC
Monographs  and Surveys in Pure and Applied  Mathematics, Chapman \&
Hall/CRC, Boca Raton, FL, 2008.

\bibitem{MiSh3} { J. Mierczynski and W. Shen},
{Lyapunov
exponents  and asymptotic dynamics in random Kolmogorov models},
{\it J. Evolution Equations}, {\bf 4} (2006), 377-390.

\bibitem{Nad}  { G. Nadin},
{Traveling fronts in space-time periodic media}, {\it J. Math. Pures Appl.},  {\bf (9) 92} (2009),  pp. 232-262.

\bibitem{Nad1} G. Nadin, Critical travelling waves for general heterogeneous one-dimensional reaction-diffusion equations,
{\it Ann. Inst. H. Poincaré Anal. Non Linéaire}, {\bf 32} (2015), no. 4, pp. 841-873.

\bibitem{NaRo1} { G. Nadin and L. Rossi},
{Transition waves for Fisher-KPP equations with general time-heterogeneous and space-periodic coeffcients},
{\it  Anal. PDE}, {\bf  8} (2015), no. 6, pp.
1351-1377.

\bibitem{NaRo2} {G. Nadin and L. Rossi}, {Propagation phenomena for time heterogeneous KPP reaction-diffusion equations},
{\it J. Math. Pures Appl.} {\bf (9) 98} (2012), no. 6, pp. 633-653.

\bibitem{NaRo3} { G. Nadin and L. Rossi}, {Generalized transition fronts for one-dimensional almost periodic Fisher-KPP equations},
{\it Arch. Ration. Mech. Anal.}, {\bf 223} (2017), pp. 1239-1267.





\bibitem{NoRoRyZl} {J. Nolen, J.-M. Roquejoffre, L. Ryzhik, and A. Zlato\v s}, {
Existence and non-existence of Fisher-KPP transition fronts},  {\it Arch. Ration. Mech. Anal.}, {\bf  203} (2012), no. 1, pp. 217-246.

\bibitem{NoRuXi} {J. Nolen, M. Rudd, and J. Xin}, {Existence of KPP
fronts in spatially-temporally periodic adevction and variational
principle for propagation speeds}, {\it Dynamics of PDE}, {\bf 2}
(2005), pp. 1-24.

\bibitem{NoXi} {J. Nolen and J.  Xin},
 {Existence of KPP type fronts in space-time periodic shear flows and a study of minimal speeds based on variational principle},
  {\it Discrete Contin. Dyn. Syst.}, {\bf 13} (2005), no. 5, pp. 1217-1234.



\bibitem{RaSh} { N. Rawal and W.  Shen},
 {Criteria for the existence and lower bounds of principal eigenvalues of time periodic nonlocal dispersal operators and applications},
  {\it J. Dynam. Differential Equations}, {\bf 24} (2012), no. 4, pp. 927-954.

\bibitem{RaShZh} { N. Rawal, W. Shen, and A. Zhang}, {Spreading speeds and traveling waves of nonlocal monostable equations in time and space periodic habitats},  {\it Discrete Contin. Dyn. Syst.}, {\bf  35} (2015), no. 4, pp. 1609-1640.


\bibitem{Sat} {D. H. Sattinger}, {On the stability of waves of nonlinear
parabolic systems}, {\it Advances in Math.}, {\bf 22} (1976), pp.
312-355.






\bibitem{She4} { W. Shen}, { Traveling waves in diffusive random media},
{\it J. Dynam. Diff. Eq.}, {\bf 16} (2004), pp. 1011-1060.



\bibitem{She6} {W. Shen}, { Variational principle for spatial spreading
speeds and generalized wave solutions  in time almost and space
periodic KPP models},  {\it Trans. Amer. Math. Soc.}, {\bf 362} (2010), no. 10, pp. 5125-5168

\bibitem{She7} {W. Shen}, {Existence, uniqueness, and stability of
generalized traveling solutions in time dependent monostable equations},
{\it J. Dynam. Diff. Eq.},
{\bf 23} (2011), no. 1, pp. 1-44.

\bibitem{She8} {W. Shen} {Existence of generalized traveling waves in time recurrent and space periodic monostable equations},
 {\it J. Appl. Anal. Comput.}, {\bf 1} (2011), no. 1, pp. 69-93.



\bibitem{ShSh1} {W. Shen and Z. Shen}, {Transition fronts in nonlocal Fisher-KPP equations in time heterogeneous media},
 {\it Commun. Pure Appl. Anal.}, {\bf  15} (2016), no. 4, pp. 1193-1213.

 \bibitem{ShSh2} { W. Shen and Z. Shen}, {Regularity of Transition Fronts in Nonlocal Dispersal Evolution Equations}, {http://arxiv.org/abs/1504.02525}.

\bibitem{ShZh1} {W. Shen and A. Zhang},
 {Traveling wave solutions of spatially periodic nonlocal monostable equations},
  {\it Comm. Appl. Nonlinear Anal.}, {\bf 19} (2012), no. 3, pp. 73-101.

\bibitem{ShZh2} {W. Shen and A.  Zhang},
 {Spreading speeds for monostable equations with nonlocal dispersal in space periodic habitats},
  {\it J. Differential Equations}, {\bf 249} (2010), no. 4, pp. 747-795.

  \bibitem{TaZhZl} {T. Tao, B. Zhu and A. Zlato\v{s}},
   {Transition fronts for inhomogeneous monostable reaction-diffusion equations via linearization at zero},
    {\it Nonlinearity} 27 (2014), no. 9, pp. 2409-2416.

\bibitem{Uch} {K. Uchiyama}, {The behavior of solutions of
some nonlinear diffusion equations for large time},  {\it J. Math.
Kyoto Univ.}, {\bf 18–3} (1978), pp. 453-508.



\bibitem{Wei1} {H. F.  Weinberger}, {Long-time behavior of a class of
biology models}, {\it SIAM J. Math. Anal.}, {\bf 13} (1982), pp.
353-396.

\bibitem{Wei2} {H. F.  Weinberger}, {On spreading speeds and traveling
waves for growth and migration models in a periodic habitat},
{\it J.
Math. Biol.}, {\bf 45} (2002), pp. 511-548.





\bibitem{Xin1} {J. Xin}, {Front propagation in heterogeneous media},
 {\it SIAM Review}, {\bf 42} (2000), pp. 161-230.

\bibitem{Zla} { A. Zlato\v{s}},
{ Transition fronts in inhomogeneous Fisher-KPP reaction-diffusion equations},  {\it J. Math. Pures Appl.}, {\bf (9)} 98 (2012), no. 1, pp. 89-102.




\end{thebibliography}
\end{document}